\newcommand{\vertiii}[1]{{\vert\kern-0.25ex\vert\kern-0.25ex\vert #1 
    \vert\kern-0.25ex\vert\kern-0.25ex\vert}}
\newlength{\dhatheight}
\newcounter{rcnt}[section]
\def\argmin{\mathop{\rm argmin}}
\newcommand{\seqn}{\textnormal{seq}_{[n]}}
\newcommand{\indep}{\perp \!\!\! \perp}
\newcommand{\radius}{\mathsf{rad}}
\newcommand{\dtv}{\mathsf{d}_{\mathsf{TV}}}
\newcommand{\Mod}[1]{\ (\mathrm{mod}\ #1)}
\newcommand{\R}{\mathbb{R}}
\newcommand{\EE}{\mathbb{E}}
\newcommand{\E}{\mathbb{E}}
\newcommand{\PP}{\mathbb{P}}
\newcommand{\III}[1]{{\left\vert\kern-0.25ex\left\vert\kern-0.25ex\left\vert #1 
    \right\vert\kern-0.25ex\right\vert\kern-0.25ex\right\vert}}
\newcommand{\simiid}{\stackrel{\text{iid}}{\sim}}
\newcommand{\ca}{\mathcal{A}} \newcommand{\cb}{\mathcal{B}}  \newcommand{\cd}{\mathcal{D}}    \newcommand{\ch}{\mathcal{H}}         \newcommand{\cq}{\mathcal{Q}}     \newcommand{\cw}{\mathcal{W}} \newcommand{\cx}{\mathcal{X}}  \newcommand{\cz}{\mathcal{Z}}
\newcommand{\eps}{\varepsilon}
\providecommand*{\diff}%
	{\@ifnextchar^{\DIfF}{\DIfF^{}}}
\def\DIfF^#1{%
	\mathop{\mathrm{\mathstrut d}}%
		\nolimits^{#1}\gobblespace}
\def\gobblespace{%
	\futurelet\diffarg\opspace}
\def\opspace{%
	\let\DiffSpace\!%
	\ifx\diffarg(%
		\let\DiffSpace\relax
	\else
		\ifx\diffarg[%
			\let\DiffSpace\relax
	\else
		\ifx\diffarg\{%
			\let\DiffSpace\relax
		\fi\fi\fi\DiffSpace}
\theoremstyle{plain}
\newtheorem{theorem}{Theorem}
\newtheorem{prop}[theorem]{Proposition}
\newtheorem{corollary}[theorem]{Corollary}
\newtheorem{lemma}[theorem]{Lemma}
\newtheorem{assumption}{Assumption}
\theoremstyle{definition}
\newtheorem{definition}[theorem]{Definition}
\theoremstyle{remark}
\newtheorem{remark}[theorem]{Remark}
\title{Stability via resampling: statistical problems beyond the real line}
\author[1]{Jake A. Soloff}
\author[1]{Rina Foygel Barber}
\author[1,2]{Rebecca Willett}
\affil[1]{Department of Statistics, University of Chicago}
\affil[2]{Department of Computer Science, University of Chicago}
\date{\today}
\begin{document}

\maketitle

\begin{abstract}
        Model averaging techniques based on resampling methods (such as bootstrapping or subsampling) have been utilized across many areas of statistics, often with the explicit goal of promoting stability in the resulting output. 
    We provide a general, finite-sample theoretical result guaranteeing the stability of 
    bagging when applied to algorithms that return outputs in a general space, so that the output is not necessarily a real-valued---for example, an algorithm that estimates a vector of weights or a density function.
    We empirically assess the stability of bagging on synthetic and real-world data for a range of problem settings, including causal inference, nonparametric regression, and Bayesian model selection. 
\end{abstract}

\section{Introduction}\label{sec:intro}

Statistical stability holds when our conclusions, estimates, fitted models, predictions, or decisions are insensitive to small perturbations of data. Stability is a pillar of modern data science \citep{yu2020veridical}. Nonparametric tools for statistical inference---e.g. the bootstrap, subsampling, the Jackknife---perturb data to assess variability and can thus be viewed as aspects of stability analysis \citep{yu2013stability}. 
While assessing stability is in general statistically intractable without placing assumptions on the data~\citep{kim2021black}, 
these resampling-based approaches can also be used to reduce the issue of instability: 
for instance, \cite{breiman1996bagging,breiman1996heuristics} originally proposed bagging---which applies bootstrap resampling to construct an ensemble of machine learning models---as an off-the-shelf stabilizer for regression and classification. 
In this paper, we show how bagging can be used to stabilize statistical methods far beyond its origins in supervised learning problems.

\subsection{Problem settings}\label{sec:problem_settings}
We begin by highlighting some specific statistical problems where provably stable methods are desirable.

\paragraph{Problem setting 1 (background): the supervised learning setting}

In the supervised learning setting, we are given a labeled data set $(X_1,Y_1),\dots,(X_n,Y_n)$ and are tasked with constructing a regression function $\hat{f}$ to predict $Y$ given $X$ for new data points. Many modern methods from the recent machine learning literature offer powerful predictive performance, but suffer from high sensitivity to the training data, where a prediction $\hat{f}(X)$ (at a new test point $X$) may vary immensely if the training data is perturbed slightly. Interestingly, both high accuracy and poor stability can often be a consequence of working with highly overparameterized models. 

Suppose that the response $Y$ is real-valued, so that our regression functions take values in $\mathbb{R}$, i.e., $\hat{f}: \mathcal{X}\to\mathbb{R}$. Recent work by \cite{soloff2024bagging} establishes that bagging the predictive model---that is, re-fitting $\hat{f}$ on bootstrapped training sets, and then returning the averaged prediction---yields an assumption-free guarantee on the stability of the prediction $\hat{f}(X)\in\mathbb{R}$.

While \cite{soloff2024bagging}'s work is presented as a result for supervised learning, it can be viewed more generally as a guarantee of stability for \emph{any} algorithm that is trained on data and returns a real-valued output. But even with this generalization,
this result only covers a very specific statistical scenario. For many statistical problems, we might be interested in the stability of a procedure that returns a more complex object, such as a function or a vector of weights, rather than a real-valued output.

\paragraph{Problem setting 2: reproducible Bayesian inference}
In Bayesian data analysis, given a data set $\cd$ containing $n$ observations drawn from some distribution $p_\theta$,  we are interested in the posterior distribution of the parameter $\theta$. For instance, $\theta$ might represent a particular model from a finite set of possibilities, in which case the posterior distribution on $\theta$ is simply a vector of weights on this finite set. Alternatively, $\theta$ might take values in a continuous space such as $[0,1]$ (e.g., if $\theta$ represents a probability) or $\R^d$ (e.g., coefficients for some regression model), in which case the posterior distribution is a  distribution over $[0,1]$ or over $\R^d$---a more complex object.

Bayesian inference may sometimes lead to answers that are highly sensitive with respect to perturbations of the data. In particular, this can occur in settings where the model is misspecified (but nonetheless, is a reasonably good approximation to the observed data, so that computing the posterior is still a reasonable goal). To alleviate this instability, the \textit{BayesBag} approach \citep{buhlmann2014discussion} averages posterior distributions over different resamplings of the data~$\cd$. \cite{huggins2023reproducible} analyzes the accuracy and stability properties of the BayesBag approach for the specific problem of Bayesian model selection, where $\theta$ lies in a finite set of models $\mathfrak{M} = \{\mathfrak{m}_1,\dots,\mathfrak{m}_K\}$ 
(and so the posterior on $\theta$ is given by a vector of weights of length $K$, i.e., with the $k$th weight indicating the posterior probability $\mathbb{P}\{\theta = \mathfrak{m}_k\mid \cd\}$). 
Their work establishes BayesBag as an empirically effective method with asymptotic guarantees, but has not yielded finite-sample guarantees even in the finite model setting.
Of course, we can also ask this question even more generally: will BayesBag lead to greater stability in a general setting, where $\theta$ may lie in an arbitrary space (e.g., if the posterior is a distribution on $[0,1]$, or on $\R^d$, etc.)?

\paragraph{Problem setting 3: synthetic controls for causal inference}
The synthetic control method (SCM) is a popular approach for estimating causal effects with panel data \citep{abadie2003economic,abadie2010synthetic,abadie2015comparative,abadie2021using}. The method is particularly useful in settings with a single treated unit and a relatively small number of control units. A synthetic control is a weighted combination of control units, where the weights are chosen to match the treated unit's pre-treatment outcomes and other covariates. SCM estimates the counterfactual---what outcomes would we expect for the treated unit had the policy not been adopted?---using the synthetic control's post-treatment outcomes. For example, \cite{abadie2015comparative} applies SCM to the 1990
German reunification and its impact on per capita GDP in West Germany. Per capita GDP 
was increasing prior to reunification and continued to increase in subsequent years, for this was a period of sustained growth across industrialized countries. To assess the counterfactual, a `synthetic West Germany' is constructed as follows: we find the convex combination of other industrialized countries such that the pre-1990 economic trend most closely resembles that of West Germany. \cite{abadie2015comparative} finds that West Germany's economy did not grow as much as it would have without reunification.

Often SCM's weights are of direct interest, as they say something about which peer countries are most relevant in this counterfactual assessment. 
However, in order to reliably interpret the weights, we should first ensure they are stable \citep{murdoch2019definitions}. For instance, if we remove \emph{Denmark} from the analysis 
(a country which receives zero weight in the synthetic West Germany) then there is no longer a positive weight on the United States, which previously received $22\%$ of the weight. This sensitivity appears to betray our intuitive interpretation of the meaning of SCM's weights. 
Can a resampling-type approach to SCM alleviate these issues?

\subsection{Our contributions}
Algorithmic stability is desirable for statistical methods far beyond its origins in statistical learning theory \citep{devroye1979distribution,devroye1979distribution2,bousquet2002stability}. Broadly, stability is considered a `prerequisite for trustworthy interpretations' \citep{murdoch2019definitions}. The goal of this paper is to establish quantitative guarantees on the stability of bagging as a general-purpose tool across a wide range of problems where the output---the output being estimated from the data---might not be real-valued and might even be infinite-dimensional. 
\Cref{tab:applications} lists a range of problem settings where our framework can be directly applied (including the settings discussed above, namely, supervised learning, Bayesian inference, and synthetic controls). Formally, our results will apply to any algorithm, randomized or deterministic, with outputs in bounded subset of a separable Hilbert or Banach space---we give details below. We highlight one particularly important finding: our main results will show that for a Hilbert space, there is \emph{no cost} to high dimensionality in terms of the stability guarantees that can be obtained (in contrast, for the more general setting of a Banach space, this is no longer the case).

\paragraph{Outline} The remainder of this paper is organized as follows.
In \Cref{sec:framework}, we define a framework for algorithmic stability in any metric space, and then define bagging for an arbitrary algorithm in \Cref{sec:bagging}. Our main results on the stability of bagging are presented in \Cref{sec:hilbert_theory} and \Cref{sec:banach_theory}.   In \Cref{sec:apps}, we take a closer look at the implications of our results on a range of specific settings appearing in \Cref{tab:applications}, and discuss some consequences of our stability guarantee. We conclude with a discussion in \Cref{sec:conclusion}.
All proofs are deferred to the Appendix.

\begin{table}[t]
    \centering
    \begin{tabular}{c|c|c}
       Applications & Output Space $\cw$ & Metric $\mathsf{d}$ on $\cw$\\
        \hline\hline
        Supervised learning & Predicted value range $[a,b]\subset\R$ & Absolute value $|\cdot|$\\
        \hline
    \begin{tabular}{@{}c@{}}
        Bayesian model selection \\
        Synthetic controls 
        \end{tabular} & Probability simplex $\Delta_{K-1}$ & \begin{tabular}{@{}c@{}} Euclidean dist.\ $\|\cdot\|_2$ or 
        \\ Total variation dist.\ $\dtv$
        \end{tabular} \\
    \hline
    \begin{tabular}{@{}c@{}}Constrained Lasso ($q=1$)\\ Constrained Ridge ($q=2$)
        \end{tabular} & $\ell_q$-ball $\mathbb{B}_q^{d}(R)$ & $\ell_q$-norm $\|\cdot\|_q$ \\
    \hline
    \begin{tabular}{@{}c@{}}Density estimation\\ (e.g., Bayesian posterior)
        \end{tabular} & $L$-Lipschitz densities on $[0,1]$ & Total variation dist.\ $\dtv$ \\
    \hline
    Nonparametric regression & Sobolev ball of functions $\R^d\to\R$ & Sobolev norm $\|\cdot\|_{s,2}$ \\
    \hline
    \end{tabular}
    \caption{Example problem settings for our stability framework, including the examples highlighted in Problem Settings~1--3 (in \Cref{sec:intro}).}
    \label{tab:applications}
\end{table}

\section{Framework: algorithmic stability in a metric space}\label{sec:framework}

We define a \emph{randomized algorithm}~$\ca$ as function that takes in a finite sequence of data $\cd = (Z_i)_{i\in [n]}$, as well as some independent randomness $\xi\sim \text{Uniform}([0,1])$, and returns an output~$\ca(\cd; \xi)$. 
The term $\xi$, which we refer to as the \emph{random seed}, determines all randomness used by the algorithm---for instance, a random initialization or a random split of the data into batches.
The algorithm $\ca$ is deterministic if it does not depend on the seed~$\xi$.

This notion of a (deterministic or randomized) algorithm covers many disparate problems in statistical learning, such as the problem settings introduced in \Cref{sec:problem_settings}. 
\begin{itemize}
    \item Consider a supervised learning setting (Problem Setting 1) with data points $Z_i = (X_i,Y_i)\in\cx\times\R$. If we are interested in predicting $Y$ given a new feature $X=x$, our algorithm $\ca$ can return $\hat{f}(x)\in\R$, the predicted value at $X=x$, as its output. (Alternatively, if we are interested in building a  model for $Y$ given $X$, our algorithm $\ca$ can return an entire fitted regression function $\hat{f}$---a more complex output.)
    \item In Bayesian inference (Problem Setting 2), we observe data points $Z_1,\dots,Z_n$ drawn from some distribution $p_\theta$, where $\theta\in\Theta$ is the unknown parameter. To perform Bayesian inference on $\theta$, our algorithm $\ca$ would then return the posterior distribution over $\theta\in\Theta$. 
    \item For the problem of synthetic controls (Problem Setting 3), our data points $Z_i$ are the feature vectors for each control unit $i=1,\dots,n$, and the algorithm $\ca$ returns a weight vector of dimension $n$, specifying the convex combination of control units that define the synthetic control.
\end{itemize}
In some settings it may be common to use a randomized, rather than deterministic, algorithm. For example, in Bayesian inference,
in a high-dimensional setting we might choose to approximate the posterior through (random) sampling rather than computing the density explicitly.

Formally, we write
\[
\ca : \bigcup_{n\ge 0}\cz^n\times [0,1] \to \cw,
\]
where the output takes values in some space $\cw$ equipped with a metric $\mathsf{d}$---see \Cref{tab:applications} for examples. From this point on, we write $\hat{w} = \ca(\cd;\xi)$ to denote the output, regardless of the nature of the output (e.g., a finite-dimensional vector of weights for the synthetic control problem, or a posterior density function for the Bayesian inference setting, etc.).\footnote{We assume throughout that $t\mapsto \ca((Z_i)_{i\in[n]},t)$, which is a function from $[0,1]$ to $\cw$, is measurable for any fixed $(Z_i)_{i\in[n]}\in\cz^n$. Measurability is defined with respect to the Borel $\sigma$-algebra on $(\cw, \mathsf{d})$.}

\subsection{Defining stability} There are many related definitions of algorithmic stability in the literature. We focus on stability of the algorithm's output with respect to dropping a data point: will the output $\hat{w}$ of the algorithm be similar if one data point is removed at random from the training data set?
\begin{definition}
\label{def:bstability}
    An algorithm~$\ca$ has \emph{mean-square stability} $\beta^2$ with respect to a metric~$\mathsf{d}$ if, for all data sets $\cd \in \cz^n$ of size $n$,
    \[
    \frac{1}{n}\sum_{i=1}^n \E_\xi\left[\mathsf{d}^2\left(\hat{w}, \hat{w}^{\setminus i}\right)\right]\le \beta^2.
    \]
    where $\hat{w}=\ca(\cd; \xi)$ is the output of the algorithm run on the full data set~$\cd$, $\hat{w}^{\setminus i}=\ca(\cd^{\setminus i}; \xi)$ is the output of the algorithm run on the leave-one-out data set $\cd^{\setminus i} = (Z_j)_{j\ne i}$, and the expectation averages over $\xi\sim \text{Uniform}[0,1]$.
\end{definition}

This first definition captures the average perturbation in $\hat{w}$ when one data point is removed at random from some data set $\cd\in\cz^n$---that is, $\mathsf{d}^2\left(\hat{w}, \hat{w}^{\setminus i}\right)$, averaged over which data point $i$ gets removed.
An alternative way to define stability is to control the tails of these perturbations $\mathsf{d}\left(\hat{w}, \hat{w}^{\setminus i}\right)$, ensuring that not too many of them can be large.
\begin{definition}\label{def:stability}
An algorithm~$\ca$ has \emph{tail stability} $(\varepsilon, \delta)$ with respect to a metric~$\mathsf{d}$ if, for all data sets $\cd \in \cz^n$ of size $n$,
    \[
    \frac{1}{n}\sum_{i=1}^n \mathbb{P}_\xi\left\{\mathsf{d}(\hat{w}, \hat{w}^{\setminus i}) \ge \eps\right\}\le\delta,
    \]
where $\hat{w}=\ca(\cd; \xi)$ and $\hat{w}^{\setminus i}=\ca(\cd^{\setminus i}; \xi)$ as in \Cref{def:bstability}.
\end{definition}
The terminology ‘$\ca$ has mean-square stability $\beta^2$’ in \Cref{def:bstability},  and ‘$\ca$ has tail stability $(\varepsilon, \delta)$’ 
in \Cref{def:stability},
suppresses the dependence on the sample size~$n$. Since we perform a non-asymptotic stability
analysis, we treat~$n$ as a fixed positive integer throughout.

These two notions of stability are clearly very similar in flavor, and the following simple result shows that each implies the other.
\begin{prop}\label{prop:stability_defs}
     If $\ca$ has  mean-square stability $\beta^2$ with respect to $\mathsf{d}$, then $\ca$ has tail stability $(\varepsilon,\delta)$ with respect to $\mathsf{d}$, for any $\varepsilon,\delta\geq 0$ satisfying $\delta\varepsilon^2\geq \beta^2$. Conversely, if $\ca$ has tail stability $(\varepsilon,\delta)$ with respect to~$\mathsf{d}$, then $\ca$ has mean-square stability $\beta^2 = \eps^2 + \delta\,\sup_{w,w'\in\cw}\mathsf{d}^2\left(w, w'\right)$ with respect to~$\mathsf{d}$.
\end{prop}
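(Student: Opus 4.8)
Both directions are elementary and rely only on Markov's inequality together with a truncation bound applied termwise in the index $i$, then averaged.

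For the forward direction, fix a data set $\cd\in\cz^n$ and a coordinate $i\in[n]$. Since $\{\mathsf{d}(\hat{w},\hat{w}^{\setminus i})\ge\eps\}=\{\mathsf{d}^2(\hat{w},\hat{w}^{\setminus i})\ge\eps^2\}$, Markov's inequality gives $\PP_\xi\{\mathsf{d}(\hat{w},\hat{w}^{\setminus i})\ge\eps\}\le\eps^{-2}\,\E_\xi[\mathsf{d}^2(\hat{w},\hat{w}^{\setminus i})]$ (valid for $\eps>0$; the case $\eps=0$ is trivial once $\delta\ge\beta^2/\eps^2$ is interpreted appropriately, i.e.\ it forces $\delta\ge 1$ only if $\beta>0$, and otherwise $\delta$ can be taken $\ge 1$ so the bound holds vacuously). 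Averaging this inequality over $i=1,\dots,n$ and invoking \Cref{def:bstability} yields $\frac1n\sum_{i=1}^n\PP_\xi\{\mathsf{d}(\hat{w},\hat{w}^{\setminus i})\ge\eps\}\le\eps^{-2}\cdot\frac1n\sum_{i=1}^n\E_\xi[\mathsf{d}^2(\hat{w},\hat{w}^{\setminus i})]\le\beta^2/\eps^2\le\delta$, which is exactly tail stability $(\eps,\delta)$.

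For the converse, write $D:=\sup_{w,w'\in\cw}\mathsf{d}(w,w')$ and fix $\cd$ and $i$. On the event $\{\mathsf{d}(\hat{w},\hat{w}^{\setminus i})<\eps\}$ we have $\mathsf{d}^2(\hat{w},\hat{w}^{\setminus i})\le\eps^2$, and on the complementary event we bound $\mathsf{d}^2(\hat{w},\hat{w}^{\setminus i})\le D^2$; hence $\E_\xi[\mathsf{d}^2(\hat{w},\hat{w}^{\setminus i})]\le\eps^2+D^2\,\PP_\xi\{\mathsf{d}(\hat{w},\hat{w}^{\setminus i})\ge\eps\}$. Averaging over $i$ and applying \Cref{def:stability} gives $\frac1n\sum_{i=1}^n\E_\xi[\mathsf{d}^2(\hat{w},\hat{w}^{\setminus i})]\le\eps^2+D^2\cdot\frac1n\sum_{i=1}^n\PP_\xi\{\mathsf{d}(\hat{w},\hat{w}^{\setminus i})\ge\eps\}\le\eps^2+\delta D^2$, i.e.\ mean-square stability with $\beta^2=\eps^2+\delta\sup_{w,w'}\mathsf{d}^2(w,w')$.

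There is no substantive obstacle here; the only points requiring a small amount of care are the measurability of $\xi\mapsto\mathsf{d}(\hat{w},\hat{w}^{\setminus i})$ (so that the probabilities and expectations are well defined), which follows from the standing measurability assumption on $\ca$ together with continuity of $\mathsf{d}$, and the degenerate boundary cases $\eps=0$ or $D=\infty$, which are handled by inspection. I would present the argument essentially as above, perhaps as a single short display per direction.
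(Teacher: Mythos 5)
Your proposal is correct and matches the paper's own argument essentially line for line: Markov's inequality applied to $\mathsf{d}^2$ for the forward direction, and the truncation bound $\mathsf{d}^2(\hat{w},\hat{w}^{\setminus i})\le\eps^2+\mathbf{1}\{\mathsf{d}(\hat{w},\hat{w}^{\setminus i})\ge\eps\}\sup_{w,w'}\mathsf{d}^2(w,w')$ averaged over $i$ for the converse. The extra remarks on measurability and degenerate cases are fine but not needed beyond what the paper already assumes.
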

\begin{proof}
    The first claim follows by Markov's inequality, since
    \begin{align*}
    \frac{1}{n}\sum_{i=1}^n \mathbb{P}_\xi\left\{\mathsf{d}\left(\hat{w}, \hat{w}^{\setminus i}\right) \ge \eps\right\}
    &= \mathbb{E}_\xi\left[\frac{1}{n}\sum_{i=1}^n \mathbf{1}\left\{\mathsf{d}^2\left(\hat{w}, \hat{w}^{\setminus i}\right) \ge \varepsilon^2\right\}\right] \\
    &\leq \varepsilon^{-2} \mathbb{E}_\xi\left[\frac{1}{n}\sum_{i=1}^n \mathsf{d}^2\left(\hat{w}, \hat{w}^{\setminus i}\right)\right] \leq \varepsilon^{-2}\beta^2.
    \end{align*}
    For the second claim, since $\mathsf{d}^2\left(\hat{w}, \hat{w}^{\setminus i}\right) \le \varepsilon^2 + \mathbf{1}\left\{\mathsf{d}\left(\hat{w}, \hat{w}^{\setminus i}\right) \ge \varepsilon\right\}\cdot \sup_{w,w'\in\cw}\mathsf{d}^2\left(w, w'\right)$, we have
    \begin{align*}
    \frac{1}{n}\sum_{i=1}^n \E_\xi\left[\mathsf{d}^2\left(\hat{w}, \hat{w}^{\setminus i}\right)\right]
    &\le \eps^2 + \frac{1}{n}\sum_{i=1}^n \PP_\xi\left\{\mathsf{d}\left(\hat{w}, \hat{w}^{\setminus i}\right) \ge \varepsilon\right\}\cdot \sup_{w,w'\in\cw}\mathsf{d}^2\left(w, w'\right)\\
    &\le \eps^2 + \delta\cdot \sup_{w,w'\in\cw}\mathsf{d}^2\left(w, w'\right),
    \end{align*}
    completing the proof.
\end{proof}

\subsection{Related notions of stability}
Our definitions of stability are stronger than many common definitions in the literature. For instance, we require the algorithm to be stable for \emph{any} fixed data set~$\cd$ to certify the stability of the method without looking at the specific data set. \Cref{def:stability} implies in particular that the algorithm is stable when the data~$(Z_i)_{i\in [n]}$ are i.i.d.\ from some distribution~$P$, but our definition does not require any stochastic model.

In many cases, practitioners care specifically about the stability of the algorithm's output $\hat{w}$, e.g., if we want to interpret the values of some learned weights, cluster assignments, etc. However, in other cases, we may be more concerned with the stability of some downstream decisions or outcomes based on an analysis. For instance, in supervised learning (Problem Setting 1), we may desire stability of a loss incurred by the output $\hat{w}$---e.g., a predictive error $\ell(\hat{w},(x,y)) = |y-\hat{w}(x)|$. 
These types of loss-based stability properties can often be inferred from stability of the output $\hat{w}$ (see \cite{soloff2024bagging} for further discussion). In this paper, we only consider the stability of $\hat{w}$ itself.

\section{Constructing a bagged algorithm via resampling}\label{sec:bagging}
In this section, we formally define the construction of a bagged algorithm.

Stability is about (in)sensitivity to perturbed data sets. Resampling methods deliberately perturb the data, so they are naturally a useful tool for stabilizing algorithms: given a data set $\cd\in\cz^n$ containing $n$ data points, we might retrain our algorithm repeatedly on resampled data sets,  
and then average the resulting outputs. 
There are several common  schemes for constructing these resampled data sets. 
Here, we mention two of the most common methods: 
\begin{itemize}
    \item \emph{Bootstrapping}  
    constructs each resampled data set by sampling $m$ indices $i_1,\dots,i_m$ uniformly with replacement from $[n]$ (the term ``bootstrap'' commonly refers to this method with the choice $m=n$, so that the resampled data sets are of the same size as the original~$\cd$). 
    \item Alternatively, we may instead sample data points without replacement, so that each contains a uniformly drawn subset containing $m$ of the original $n$ data points (e.g., with $m=n/2$). This type of approach is known as \emph{subbagging}.
\end{itemize}

We now define some notation that unifies these two variants (as well as allowing for other resampling schemes, as we discuss below).

Following the framework and notation of \cite{soloff2024bagging}, we define the set of finite sequences of indices in~$[n]$:
\[\seqn := \{(i_1, \ldots, i_k) : k\ge 0, \,i_1,\ldots,i_k\in [n]\}.\] 
A sequence $r=(i_1,\dots,i_m)\in\seqn$ is commonly referred to as a \emph{bag} in the context of data set resampling. For any $r\in\seqn$, 
define $\cd^r = (Z_i)_{i\in r}\in \cz^m$, the corresponding data set. 
Note that if the bag $r$ contains repeated indices (i.e., $i_k = i_\ell$ for some $k\neq \ell$), then the same data point from the original data set $\cd$ will appear multiple times in $\cd^r$.

We are now ready to define  the bagged version of any base algorithm $\ca$, using a particular resampling distribution $\cq_n$.\footnote{While the term ``bagging'' is sometimes used as a synonym for bootstrap averaging (i.e., averaging outputs after sampling from the data set without replacement) in the literature, in this paper we will use ``bagging'' more broadly, allowing for any resampling distribution~$\cq_n$.} From this point on, we assume that $\cw$ is a closed and compact subset of a separable Banach space---this technical condition ensures that taking an average or an expected value of outputs in $\cw$ is well-defined.
\begin{definition}[Bagging for an algorithm $\ca$]\label{def:bag}
   Given a data set~$\cd \in \cz^n$ and a distribution~$\cq_n$ on $\seqn$,
   return the output
   \[\hat{w} = \frac{1}{B}\sum_{b=1}^B \ca(\cd^{r_b};\xi_b),\]
where $(r_b,\xi_b)\stackrel{\textnormal{iid}}{\sim}\cq_n\times\textnormal{Uniform}[0,1]$.
\end{definition}
\noindent For example, bootstrapping corresponds to choosing $\cq_n$ as the uniform distribution over all sequences of length $m$, while subbagging takes $\cq_n$ as the uniform distribution over sequences of $m$ \emph{distinct} elements.

For our theoretical analysis, it will be useful to consider taking $B\to \infty$, so as to ``derandomize'' the procedure.
\begin{definition}[Derandomized bagging for an algorithm $\ca$]\label{def:derand-bag}
   Given a data set~$\cd \in \cz^n$ and a distribution~$\cq_n$ on $\seqn$,
   return the output
   \[\hat{w} = \EE_{r,\xi}[\ca(\cd^r; \xi)],\]
where the expected value is computed with respect to $(r,\xi)\sim\cq_n\times \text{Uniform}[0,1]$.  
\end{definition}

\subsection{A general framework for bagging}
The two variants discussed above, bagging and subbagging, are special cases of a broader framework, allowing for more flexibility in the choice of the resampling distribution $\cq_n$. 
(See \cite{soloff2024bagging} for a more complete discussion of this framework, and for examples of resampling methods beyond bagging and subbagging.)
Here we formalize the conditions that $\cq_n$ needs to satisfy, in order for our main results to be applied. 
\begin{assumption}\label{asm:cq_n}
Fix a sample size $n\geq 1$. The following conditions must be satisfied by the resampling distribution $\cq_n$. \vspace{-1.5em}
\end{assumption}
\emph{
\begin{itemize}
    \item \textbf{Symmetry:} for all $m\geq 1$, $i_1,\ldots,i_m\in [n]$, and permutations $\sigma\in \mathcal{S}_n$,
    \[\cq_n\{(i_1,\ldots,i_m)\} = \cq_n\{(\sigma(i_1),\ldots,\sigma(i_m))\}.\]
    \item \textbf{Nontrivial subsampling:} it holds that $p_{\cq_n}<1$, where we define
    \[p_{\cq_n} = \EE_{r\sim\cq_n}\left[\frac{1}{n}\sum_{i=1}^n \mathbf{1}\{i\in r\}\right],\]
    the expected fraction of data points appearing in a bag $r$ sampled from $\cq_n$.
    \item \textbf{Nonpositive covariance:}
    \[\textnormal{Cov}_{r\sim \cq_n}(\mathbf{1}_{i\in r}, \mathbf{1}_{j\in r})\le 0\textnormal{  for any $i\ne j\in [n]$}.\]
    \item \textbf{Compatibility with the leave-one-out distribution:} the corresponding distribution $\cq_{n-1}$ on $\textnormal{seq}_{[n-1]}$ must equal the distribution of $r\sim \cq_n$ conditional on the event~$n\not\in r$.
\end{itemize}
}
Before proceeding, we comment on the last part of the assumption: why does $\cq_{n-1}$ appear in our assumptions on $\cq_n$? To study the stability of the bagged algorithm (\Cref{def:bag}), it is not sufficient to consider only $\cq_n$, the distribution on $\seqn$---when we compare to the result of the algorithm on the leave-one-out data set $\cd^{\setminus i}$, we now have sample size $n-1$ (instead of $n$), and consequently we need to specify a distribution $\cq_{n-1}$ on $\textnormal{seq}_{[n-1]}$, as well.

The four conditions of \Cref{asm:banach_finite_approx} are all satisfied by the standard resampling schemes described above, with
$p_{\cq_n} = 1 - \left(1-\frac{1}{n}\right)^m$ for bootstrapping (sampling $m$ out of $n$  points uniformly \emph{with} replacement),
and
$p_{\cq_n} = \frac{m}{n}$ for subbagging (sampling $m$ out of $n$  points uniformly \emph{without} replacement). 

\section{Main results for a Hilbert space}\label{sec:hilbert_theory}
We are now ready to present our stability guarantees for bagging. 
Our first main result establishes that derandomized bagging automatically provides mean-square stability for any algorithm with outputs in a bounded subset of a Hilbert space. 
In this section, we work with stability with respect to the Hilbert norm~$\|\cdot\|_{\ch}$. 
For instance,\begin{itemize}
\item If $\cw\subset  \R^d$, then the Hilbert norm may be chosen to be the Euclidean norm $\|\cdot\|_2$, or more generally, a Mahalanobis distance $\|w\|_\ch = \sqrt{w^\top S w}$ for a fixed positive definite matrix $S$.
\item If $\cw\subset L_2(\R^d)$, the space of square-integrable functions $\R^d\to \R$, then the Hilbert norm is given by the $L_2$ norm on functions, $\|w\|_\ch = (\int_{\R^d}w(x)^2\;\textnormal{d}x)^{1/2}$. Alternatively, we can take a Sobolev space of functions $\R^d\to\R$ with smoothness parameter $s$ with the Hilbert norm being the Sobolev norm $\|\cdot\|_{s,2}$ (defined formally later on in \eqref{def:sobolev-norm}---see Experiment 3 in \Cref{sec:data} for details).  
\end{itemize}

\begin{theorem}\label{thm:hilbert} Suppose~$\ca$ takes values in a convex and closed subset~$\cw\subset \ch$, where $\ch$ is a separable Hilbert space, and define \[\radius_{\ch}(\cw) = \inf_{w\in\cw}\sup_{w'\in\cw}\|w-w'\|_{\ch}\]
as the radius of the set $\cw$. Fix a resampling distribution~$\cq_n$ on $\seqn$, which satisfies \Cref{asm:cq_n}.
Then derandomized bagging, run with base algorithm~$\ca$ and resampling distribution~$\cq_n$, has  mean-square stability $\beta^2$ with respect to~$\|\cdot\|_\ch$, with
\begin{align}\label{eq-beta}
\beta^2
:= \frac{\radius^2_{\ch}(\cw)}{n-1}\cdot\frac{p_{\cq_n}}{1-p_{\cq_n}}.
\end{align}
\end{theorem}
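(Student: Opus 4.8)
The plan is to ``derandomize and reduce to one bag.'' Set $\phi(r):=\EE_\xi[\ca(\cd^r;\xi)]$ for $r\in\seqn$; since $\xi\mapsto\ca(\cd^r;\xi)$ takes values in the closed convex set $\cw$, the (Bochner) expectation $\phi(r)$ again lies in $\cw$, and $\hat{w}=\EE_{r\sim\cq_n}[\phi(r)]\in\cw$. There is no external seed left for the derandomized algorithm, so the $\EE_\xi$ appearing in \Cref{def:bstability} is vacuous here. The first substantive step is to rewrite the leave-one-out output: by the symmetry part of \Cref{asm:cq_n}, $\PP_{r\sim\cq_n}\{i\in r\}$ has the common value $p:=p_{\cq_n}$ for every $i$, and combining symmetry with the compatibility assumption (which identifies $\cq_{n-1}$ with the law of $r\sim\cq_n$ given $n\notin r$) yields $\hat{w}^{\setminus i}=\EE_{r\sim\cq_n}[\phi(r)\mid i\notin r]$, using that whenever $i\notin r$ the data set $\cd^r$ equals $(\cd^{\setminus i})^r$. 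The conditioning is legitimate because $p<1$ (nontrivial subsampling).

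Next I would derive the key identity. Using $\EE_r[\mathbf{1}_{i\in r}]=p$, a one-line rearrangement of $\hat{w}^{\setminus i}=(1-p)^{-1}\EE_r[\phi(r)\mathbf{1}_{i\notin r}]$ gives $(1-p)(\hat{w}-\hat{w}^{\setminus i})=\EE_r[(\phi(r)-\hat{w})\mathbf{1}_{i\in r}]=\cov_{r\sim\cq_n}(\phi(r),\mathbf{1}_{i\in r})$, a Hilbert-space-valued covariance. Hence
\[
\frac{1}{n}\sum_{i=1}^n\|\hat{w}-\hat{w}^{\setminus i}\|_\ch^2=\frac{1}{n(1-p)^2}\sum_{i=1}^n\big\|\cov_r\big(\phi(r),\mathbf{1}_{i\in r}\big)\big\|_\ch^2,
\]
so everything reduces to controlling $\sum_i\|\cov_r(\phi(r),\mathbf{1}_{i\in r})\|_\ch^2$ together with $\EE_r\|\phi(r)-\hat{w}\|_\ch^2$.

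The crux --- and the step I expect to be the main obstacle --- is the inequality $\sum_{i=1}^n\|\cov_r(\phi(r),\mathbf{1}_{i\in r})\|_\ch^2\le\lambda_{\max}(\Sigma)\cdot\EE_r\|\phi(r)-\hat{w}\|_\ch^2$, where $\Sigma\in\R^{n\times n}$ is the covariance matrix of the $0/1$ vector $(\mathbf{1}_{i\in r})_{i\in[n]}$ under $r\sim\cq_n$. I would prove it by reducing to scalars: fix an orthonormal basis $(e_k)_{k\ge1}$ of the separable space $\ch$, put $g_k(r)=\langle\phi(r)-\hat{w},e_k\rangle_\ch$ (a mean-zero real random variable), note $\langle\cov_r(\phi(r),\mathbf{1}_{i\in r}),e_k\rangle_\ch=\cov_r(g_k(r),\mathbf{1}_{i\in r})$, and apply the scalar bound $\sum_i\cov_r(g_k,\mathbf{1}_{i\in r})^2\le\lambda_{\max}(\Sigma)\,\EE_r[g_k^2]$ followed by Parseval in $k$. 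The scalar bound itself comes from expanding the nonnegative quantity $\EE_r\big[\big(g_k(r)-c\sum_i b_i(\mathbf{1}_{i\in r}-p)\big)^2\big]\ge0$ with $b_i=\cov_r(g_k,\mathbf{1}_{i\in r})$, using $b^\top\Sigma b\le\lambda_{\max}(\Sigma)\|b\|^2$, and optimizing over $c\in\R$. The final ingredient is the \emph{exact} value of $\lambda_{\max}(\Sigma)$: by symmetry $\Sigma=(p(1-p)-c_0)I_n+c_0\mathbf{1}\mathbf{1}^\top$ with $c_0=\cov_r(\mathbf{1}_{1\in r},\mathbf{1}_{2\in r})\le0$ by the nonpositive-covariance assumption, so its eigenvalues are $p(1-p)-c_0$ (multiplicity $n-1$) and $p(1-p)+(n-1)c_0$ (multiplicity $1$); positive-semidefiniteness of $\Sigma$ forces $c_0\ge-\tfrac{p(1-p)}{n-1}$, and therefore $\lambda_{\max}(\Sigma)=p(1-p)-c_0\le\tfrac{np(1-p)}{n-1}$.

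To finish, I would bound $\EE_r\|\phi(r)-\hat{w}\|_\ch^2$ by $\radius_\ch^2(\cw)$: for any $w^*\in\cw$, the Pythagorean identity $\EE_r\|\phi(r)-w^*\|_\ch^2=\EE_r\|\phi(r)-\hat{w}\|_\ch^2+\|\hat{w}-w^*\|_\ch^2$ (valid since $\hat{w}=\EE_r\phi(r)$) and $\phi(r)\in\cw$ give $\EE_r\|\phi(r)-\hat{w}\|_\ch^2\le\sup_{w'\in\cw}\|w'-w^*\|_\ch^2$, and taking the infimum over $w^*\in\cw$ yields $\radius_\ch^2(\cw)$. Substituting the two bounds into the display gives $\frac1n\sum_i\|\hat{w}-\hat{w}^{\setminus i}\|_\ch^2\le\frac{1}{n(1-p)^2}\cdot\frac{np(1-p)}{n-1}\cdot\radius_\ch^2(\cw)=\frac{\radius_\ch^2(\cw)}{n-1}\cdot\frac{p}{1-p}$, which is precisely $\beta^2$. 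Besides the eigenvalue computation, the remaining points to check are measure-theoretic (Bochner integrability of $\xi\mapsto\ca(\cd^r;\xi)$ and $r\mapsto\phi(r)$, and that the relevant expectations and conditional expectations stay in $\cw$), which follow from the boundedness, closedness, convexity and separability hypotheses together with the stated measurability.
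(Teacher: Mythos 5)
Your proposal is correct and reaches the exact constant in \eqref{eq-beta}, and its skeleton coincides with the paper's: the same derandomization $\hat w^r=\EE_\xi[\ca(\cd^r;\xi)]$, the same identity $(1-p_{\cq_n})(\hat w-\hat w^{\setminus i})=\EE_r[(\hat w^r-\hat w)(\mathbf{1}\{i\in r\}-p_{\cq_n})]$ (which is \eqref{eqn:w_hat_i_identity}), the same Pythagorean bound $\EE_r\|\hat w^r-\hat w\|_\ch^2\le\radius_\ch^2(\cw)$, and the same eigenvalue fact that symmetry, nonpositive covariance, and positive semidefiniteness force $\lambda_{\max}(\Sigma)\le\frac{n}{n-1}p_{\cq_n}(1-p_{\cq_n})$ for the covariance matrix $\Sigma$ of the inclusion indicators. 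Where you genuinely diverge is in how the quadratic form is exploited. The paper pairs $\hat w-\hat w^{\setminus i}$ against itself via the inner product, applies Cauchy--Schwarz over $r$, and lands on a self-bounding inequality $S\le C\sqrt{S}$ in $S=\sum_i\|\hat w-\hat w^{\setminus i}\|_\ch^2$ that it then solves. You instead bound $\sum_i\|\textrm{Cov}_r(\hat w^r,\mathbf{1}\{i\in r\})\|_\ch^2$ directly by $\lambda_{\max}(\Sigma)\,\EE_r\|\hat w^r-\hat w\|_\ch^2$, proved coordinatewise in an orthonormal basis via the scalar variational inequality $\sum_i\textrm{Cov}(g,\mathbf{1}\{i\in r\})^2\le\lambda_{\max}(\Sigma)\,\EE[g^2]$ and Parseval. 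The two are dual to one another and yield identical constants; your version isolates the covariance-operator bound as a clean standalone lemma and avoids solving the implicit inequality, at the cost of invoking a countable orthonormal basis (harmless here, since $\ch$ is assumed separable), while the paper's argument is basis-free and is the version that generalizes to the Banach-space setting of \Cref{thm:banach}, where no orthonormal decomposition is available. Your closing remarks on measurability, Bochner integrability, and closed convexity keeping all expectations inside $\cw$ are exactly the technical points the paper handles by assumption.
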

 
Combining this result with \Cref{prop:stability_defs} immediately yields a guarantee on $(\varepsilon, \delta)$-stability.
\begin{corollary}\label{cor:eps_delta_stability}
Under the notation and assumptions of \Cref{thm:hilbert}, 
derandomized bagging has tail stability $(\varepsilon, \delta)$ with respect to~$\|\cdot\|_\ch$ for any $\eps,\delta\geq 0$ satisfying
\begin{align}\label{eq-eps-delta}
\delta\eps^2\ge \frac{\radius_{\ch}^2(\cw)}{n-1}\cdot \frac{p_{\cq_n}}{1-p_{\cq_n}}.
\end{align}
\end{corollary}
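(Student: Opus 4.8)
The plan is to derive the corollary by composing two results already established in the excerpt: the mean-square bound of \Cref{thm:hilbert} and the forward implication of \Cref{prop:stability_defs}. The first step is purely a matter of instantiation. I would take the abstract metric $\mathsf{d}$ appearing in \Cref{def:bstability} and \Cref{def:stability} to be the norm-induced metric $\mathsf{d}(w,w')=\|w-w'\|_\ch$ on $\cw\subset\ch$, so that both the mean-square and the tail stability notions are expressed in the Hilbert norm. Under the hypotheses we are importing (namely $\cw$ convex and closed in the separable Hilbert space $\ch$, and $\cq_n$ satisfying \Cref{asm:cq_n}), \Cref{thm:hilbert} then certifies that the derandomized bagging procedure, viewed as a single algorithm $\ca$, has mean-square stability $\beta^2 = \frac{\radius^2_\ch(\cw)}{n-1}\cdot\frac{p_{\cq_n}}{1-p_{\cq_n}}$ with respect to this metric.

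The substance of the corollary lies entirely in the mean-square-to-tail conversion, so this is the step I would spell out rather than the bound itself. I would invoke the forward direction of \Cref{prop:stability_defs}, whose engine is a per-data-point Markov bound: for each $i\in[n]$, squaring inside the event and applying Markov's inequality gives $\PP_\xi\{\|\hat w-\hat w^{\setminus i}\|_\ch\geq\varepsilon\}\leq\varepsilon^{-2}\,\E_\xi[\|\hat w-\hat w^{\setminus i}\|_\ch^2]$. Averaging over $i$ and inserting the mean-square bound yields $\frac1n\sum_{i=1}^n\PP_\xi\{\|\hat w-\hat w^{\setminus i}\|_\ch\geq\varepsilon\}\leq\varepsilon^{-2}\beta^2$, and the right-hand side is at most $\delta$ precisely when $\delta\varepsilon^2\geq\beta^2$. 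This is exactly the tail stability requirement of \Cref{def:stability}.

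The final step is to substitute the explicit value of $\beta^2$ from \Cref{thm:hilbert} into the threshold condition $\delta\varepsilon^2\geq\beta^2$, which immediately reproduces the stated inequality \eqref{eq-eps-delta}, namely $\delta\varepsilon^2\geq\frac{\radius^2_\ch(\cw)}{n-1}\cdot\frac{p_{\cq_n}}{1-p_{\cq_n}}$. No further estimation or case analysis is needed, and the constant transfers without any loss.

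I do not anticipate a genuine mathematical obstacle here; the argument is a mechanical chaining, and the only points demanding care are bookkeeping. Specifically, I would confirm that the metric $\mathsf{d}=\|\cdot\|_\ch$ fed into \Cref{prop:stability_defs} coincides with the Hilbert norm in which \Cref{thm:hilbert} is phrased, and that the hypotheses of the two results align so that derandomized bagging legitimately plays the role of the algorithm $\ca$ in both. Once these identifications are made, the two results compose directly and the corollary follows.
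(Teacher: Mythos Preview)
Your proposal is correct and matches the paper's approach exactly: the paper states the corollary as an immediate consequence of \Cref{thm:hilbert} combined with \Cref{prop:stability_defs}, and you have simply unpacked that composition (including the Markov-inequality step underlying the proposition) in more detail than the paper bothers to. Nothing more is needed.
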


\paragraph{Comparing to prior bounds}
The main result of \cite[Theorem 8]{soloff2024bagging}, ensuring the stability of algorithms that return a real-valued output, can be recovered as a special case of \Cref{cor:eps_delta_stability} above, by taking $\cw = [0,1]\subseteq \ch = \R$ (note that we then have $\radius_{\ch}(\cw) = \frac{1}{2}$). But remarkably, the result of \Cref{thm:hilbert} shows that there is no price to pay for high-dimensionality (or even for infinite dimensionality): the stability guarantee is \emph{exactly the same} for any Hilbert space, regardless of dimension, as long as the radius of the output space $\cw$ is bounded.

\subsection{Results for a finite number of bags}
The results above apply to the derandomized bagged version of $\ca$, as constructed in \Cref{def:derand-bag}. 
In practice, of course, it is not feasible to calculate this output $\hat{w}$ since we would need to average over all possible bags $r$, so we would instead use the bagged estimator $\hat{w}$ constructed in \Cref{def:bag} for some finite number of bags $B$. Since we are simply taking a Monte Carlo approximation to the derandomized bagged estimator of \Cref{def:derand-bag}, we would expect to incur error on the order of $\frac{1}{\sqrt{B}}$. 
To formalize this, we first recall a generalization of the 
Azuma-Hoeffding inequality:

\begin{theorem}[\citep{hayes2005large}] \label{thm:hayes}  Let $\hat{w}_1,\ldots,\hat{w}_B\in \ch$ be independent random variables in a Hilbert space~$\ch$ such that $\|\hat{w}_b - \mathbb{E}[\hat{w}_b]\|_\ch\le C$ almost surely for $b=1,\ldots,B$. Then for any $\delta > 0$,
\[
\mathbb{P}\left\{\bigg\|\frac{1}{B}\sum_{b=1}^B(\hat{w}_b - \mathbb{E}[\hat{w}_b])\bigg\|_\ch \le C\sqrt{\frac{1}{2B}\log\left(\frac{2e^2}{\delta}\right)}\right\} \ge 1-\delta.
\]
In particular, 
\[
\E\left[\bigg\|\frac{1}{B}\sum_{b=1}^B(\hat{w}_b - \mathbb{E}[\hat{w}_b])\bigg\|_\ch^2\right]\le \frac{C^2e^2}{B}.
\]
\end{theorem}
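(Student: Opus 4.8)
The plan is to read the first (high-probability) inequality as the statement of \citet{hayes2005large} applied to the partial-sum martingale, and to obtain the second-moment bound from it by a short tail-integration argument.

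For the first inequality, I would set $S_k = \sum_{b=1}^k\bigl(\hat w_b - \E[\hat w_b]\bigr)$ for $k=0,\dots,B$ and $\cf_k = \sigma(\hat w_1,\dots,\hat w_k)$. Since the $\hat w_b$ are independent and each centered term has mean zero, $(S_k,\cf_k)_{k=0}^B$ is an $\ch$-valued martingale whose increments obey $\|S_k - S_{k-1}\|_\ch = \|\hat w_k - \E[\hat w_k]\|_\ch \le C$ almost surely. Applying the vector-valued Azuma--Hoeffding inequality of \citet{hayes2005large} to $(S_k)$ yields a sub-Gaussian tail of the form $\PP\{\|S_B\|_\ch \ge a\}\le 2e^2\exp(-c\,a^2/(BC^2))$ for a universal constant $c$, and dividing through by $B$ (equivalently, setting $a = Bt$ and solving the resulting bound $\le\delta$ for $t$) gives the first inequality. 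The only care needed here is to track the universal constant so that the threshold comes out in the stated form; this is routine and I would not belabor it.

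For the second inequality, abbreviate $Y = \bigl\|\tfrac1B\sum_{b=1}^B(\hat w_b - \E[\hat w_b])\bigr\|_\ch$, so that the first part (after solving for $\delta$ in terms of the threshold) supplies a bound of the form $\PP\{Y^2 > v\}\le\min\{1,\,2e^2\exp(-cBv/C^2)\}$ valid for every $v\ge 0$. Then $\E[Y^2] = \int_0^\infty\PP\{Y^2 > v\}\,dv$, and I would split this integral at the crossover point $v_0$ where $2e^2\exp(-cBv_0/C^2)=1$: on $[0,v_0]$ bound the integrand by $1$, contributing $v_0 = C^2\log(2e^2)/(cB)$; on $[v_0,\infty)$ use the exponential tail, a standard integral contributing a further $C^2/(cB)$. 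Summing gives $\E[Y^2]\le C^2\bigl(1+\log(2e^2)\bigr)/(cB) = C^2(3+\log 2)/(cB)$, and one then checks that the universal constant is small enough that this is at most $C^2 e^2/B$ — the generous $e^2$ prefactor leaves ample slack.

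The main obstacle, such as it is, lies entirely on the concentration side and is outsourced to the citation: a self-contained proof of the vector-valued Azuma--Hoeffding bound requires a Hilbert-space Chernoff/sub-Gaussian estimate (e.g.\ Pinelis-type moment inequalities, or Hayes's device of projecting the martingale onto a uniformly random direction and reducing to the scalar case). Taking that inequality as given, what remains — the trivial martingale reduction, the $1/B$ rescaling, and the tail integration — is elementary.
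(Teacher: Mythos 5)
The paper does not actually prove this statement---it is imported wholesale from \citet{hayes2005large}---so there is no internal proof to compare against; your proposal is consistent with that and, usefully, fills in the one step the paper leaves implicit, namely deriving the second-moment bound from the high-probability bound. Your martingale reduction ($S_k=\sum_{b\le k}(\hat w_b-\E[\hat w_b])$ is an $\ch$-valued martingale with increments bounded by $C$) and your tail-integration argument are both correct: rewriting the first display as $\PP\{Y>t\}\le 2e^2\exp(-2Bt^2/C^2)$ and integrating $\PP\{Y^2>v\}$ with a crossover at $v_0=\tfrac{C^2}{2B}\log(2e^2)$ gives $\E[Y^2]\le \tfrac{C^2}{2B}(3+\log 2)$, which is indeed below $C^2e^2/B$. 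The one place you wave your hands is exactly where the real issue sits: Hayes's Theorem~1.8 gives $\PP\{\|S_B\|\ge a\}\le 2e^2\exp(-a^2/(2BC^2))$, which after rescaling yields a threshold of the form $C\sqrt{\tfrac{2}{B}\log(2e^2/\delta)}$, not the $C\sqrt{\tfrac{1}{2B}\log(2e^2/\delta)}$ printed in the statement; the fact that tail integration of the \emph{weaker} exponent gives $\tfrac{2C^2}{B}(3+\log 2)\approx 7.386\,C^2/B$, sitting just under $e^2C^2/B\approx 7.389\,C^2/B$, strongly suggests the authors derived the moment bound from that weaker tail, and that the $\tfrac{1}{2B}$ in the first display is a typo. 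So your "ample slack" remark holds only if one takes the printed first display at face value; if you instead track the constant from Hayes as you would need to for a self-contained argument, the $e^2$ prefactor is nearly tight and the verification is not optional.
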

Note that, aside from the additional factor $e^2$, this result is the same as what we would obtain for the case $\ch = \R$. Combining this result with \Cref{thm:hilbert} yields the following stability guarantee for bagged algorithms with a finite number of bags $B$.

\begin{corollary}\label{cor:hilbert_finite_B}
Under the notation and assumptions of \Cref{thm:hilbert}, let $B\geq 1$ be fixed.
Then bagging, run with base algorithm~$\ca$ and resampling distribution~$\cq_n$ and with $B$ bags, has  mean-square stability $\beta^2$ with respect to~$\|\cdot\|_\ch$, with
\[
\beta^2
:= \radius^2_{\ch}(\cw)\left(\frac{1}{n-1}\cdot\frac{p_{\cq_n}}{1-p_{\cq_n}} + \frac{16e^2}{B}\right).
\]
\end{corollary}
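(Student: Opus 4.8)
The plan is to treat the $B$-bag estimator of \Cref{def:bag} as a Monte Carlo approximation to the derandomized estimator of \Cref{def:derand-bag} and to control the approximation error using \Cref{thm:hayes}. Fix a data set $\cd\in\cz^n$. Write $\hat{w}_B = \frac{1}{B}\sum_{b=1}^B \ca(\cd^{r_b};\xi_b)$ with $(r_b,\xi_b)\simiid\cq_n\times\textnormal{Uniform}[0,1]$ for the bagged output, and let $\bar{w} = \EE_{r,\xi}[\ca(\cd^r;\xi)]$ be its derandomized counterpart; since $\cw$ is convex and closed we have $\bar{w}\in\cw$, and by construction $\bar{w}$ is the common mean of the i.i.d.\ summands $\ca(\cd^{r_b};\xi_b)$, so $\E[\hat{w}_B]=\bar{w}$. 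Define $\hat{w}_B^{\setminus i}$ and $\bar{w}^{\setminus i}$ analogously for the leave-one-out data set $\cd^{\setminus i}$, now resampling from $\cq_{n-1}$, so that $\E[\hat{w}_B^{\setminus i}]=\bar{w}^{\setminus i}$. For the bagged algorithm, the random seed appearing in \Cref{def:bstability} is what encodes the draws $\{(r_b,\xi_b)\}_{b=1}^B$, whereas $\bar{w}$ and $\bar{w}^{\setminus i}$ are deterministic given $\cd$.

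Next I would write $U = \hat{w}_B - \bar{w}$, $V = \hat{w}_B^{\setminus i} - \bar{w}^{\setminus i}$, and $W = \bar{w} - \bar{w}^{\setminus i}$, so that $\hat{w}_B - \hat{w}_B^{\setminus i} = (U-V)+W$. Expanding $\|(U-V)+W\|_\ch^2$ and taking the expectation over the bagging randomness, the cross term $2\langle U-V,W\rangle$ drops out since $\E[U]=\E[V]=0$ and $W$ is deterministic, giving
\[
\E\big[\|\hat{w}_B - \hat{w}_B^{\setminus i}\|_\ch^2\big] = \E\big[\|U-V\|_\ch^2\big] + \|W\|_\ch^2 \le 2\,\E\big[\|U\|_\ch^2\big] + 2\,\E\big[\|V\|_\ch^2\big] + \|W\|_\ch^2,
\]
where the last step uses $\|U-V\|_\ch^2 \le 2\|U\|_\ch^2 + 2\|V\|_\ch^2$. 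To bound $\E\|U\|_\ch^2$, apply \Cref{thm:hayes} to the independent summands $\ca(\cd^{r_b};\xi_b)$: these take values in $\cw$ and so lie within $\diam_\ch(\cw)$ of their common mean $\bar{w}$, and $\diam_\ch(\cw) \le 2\,\radius_\ch(\cw)$ (by the triangle inequality through any candidate center in $\cw$), so \Cref{thm:hayes} gives $\E\|U\|_\ch^2 \le 4\,\radius_\ch^2(\cw)e^2/B$; the same bound holds for $\E\|V\|_\ch^2$ by the identical argument applied to $\cd^{\setminus i}$. Hence the first two terms contribute at most $16\,\radius_\ch^2(\cw)e^2/B$, uniformly in $i$.

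It then remains to average over $i\in[n]$: the bound just obtained is free of $i$, and $\frac{1}{n}\sum_{i=1}^n\|W\|_\ch^2 = \frac{1}{n}\sum_{i=1}^n\|\bar{w}-\bar{w}^{\setminus i}\|_\ch^2$ is exactly the quantity bounded by \Cref{thm:hilbert} for the derandomized bagged algorithm (which carries no seed, so the $\E_\xi$ in \Cref{def:bstability} is vacuous for it), namely by $\frac{\radius_\ch^2(\cw)}{n-1}\cdot\frac{p_{\cq_n}}{1-p_{\cq_n}}$. Adding the two contributions yields the stated value of $\beta^2$. I do not expect a genuine obstacle; the only point that needs care is the constant bookkeeping — \Cref{thm:hayes} is governed by the diameter rather than the radius of $\cw$ (one factor of $4$) and the split $\|U-V\|^2 \le 2\|U\|^2 + 2\|V\|^2$ contributes the other factor of $4$, producing the $16$ on the $e^2/B$ term while the derandomized term's constant is inherited unchanged from \Cref{thm:hilbert}. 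It is also worth noting that the argument never uses any joint structure between the full-data and leave-one-out bagging draws beyond their individual marginals, so it is unaffected by how — or whether — those draws are coupled across sample sizes $n$ and $n-1$.
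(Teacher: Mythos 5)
Your proof is correct and follows essentially the same route as the paper's: the same decomposition of $\hat{w}_B-\hat{w}_B^{\setminus i}$ into the deterministic derandomized gap plus two mean-zero Monte Carlo errors, the same cancellation of the cross term, the same application of \Cref{thm:hayes} with $C=2\,\radius_\ch(\cw)$, and the same invocation of \Cref{thm:hilbert} for the remaining term, yielding identical constants.
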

\noindent Of course, tail stability follows as an immediate consequence of \Cref{prop:stability_defs}. 

\section{Extension: results outside the Hilbert space setting}\label{sec:banach_theory}
We now turn to the question of stability in a more general setting, where the output space is not necessarily a Hilbert space. We assume that $\ca$ returns outputs lying in $\cw\subset\cb$, where $\cb$ is a separable Banach space equipped with norm $\|\cdot\|$, and we now aim to establish stability with respect to this norm $\|\cdot\|$ directly.

As a motivating example, consider a setting where~$\ca$ returns an output $\hat{w}$ that specifies a distribution---for instance, this may arise in Bayesian inference (Problem Setting 2 from \Cref{sec:problem_settings}), where $\ca$ returns a posterior distribution over some parameter of interest. 
In this type of setting, 
since $\hat{w}$ represents a distribution, it is natural to study its stability with respect to $\dtv$, the total variation distance.
While $\dtv$ defines a valid norm, this is not a Hilbert norm and so the theoretical guarantees of \Cref{sec:hilbert_theory} cannot be directly applied.

To allow for meaningful stability guarantees in a broader range of settings, then, we now consider stability in this more general context, where $\|\cdot\|$ is not necessarily a Hilbert norm.

\subsection{Theoretical guarantee}
Unlike the special case of a Hilbert space, in this more general setting we need an additional assumption to ensure that stability will hold.
\begin{assumption}\label{asm:banach_finite_approx}
There exists a constant $R>0$ such that $\sup_{w,w'\in\cw}\|w-w'\|\leq R$, and moreover,
there exist some fixed elements $v_1,\dots,v_K\in \cb$ with unit norm, $\|v_1\|=\dots=\|v_K\|=1$, and a constant $\rho > 0$ such that for any $w,w'\in\cw$, the difference $w-w'$ can be approximated by a linear combination of the $v_j$'s,
    \[\Big\| (w - w') - \sum_{i=1}^K t_i v_i \Big\| \leq \rho\]
    for some $t=(t_1,\dots,t_K)\in\R^K$ with $\|t\|_1\leq R$.
\end{assumption}
\noindent Here $R$ is playing a role analogous to the radius, $\radius_\ch(\cw)$, in the Hilbert space setting. For instance, returning to our earlier example, if $\cw = \Delta_{K-1}$ (and the norm is given by $\dtv$) then we can take $v_j$ to be the $j$th canonical basis vector. Then \Cref{asm:banach_finite_approx} holds, with this same value of $K$, and with $R=1$ and $\rho=0$.

\begin{theorem}\label{thm:banach}
Suppose~$\ca$ takes values in a convex and closed  subset~$\cw\subset \cb$, where $\cb$ is a separable Banach space equipped with norm $\|\cdot\|$, and where $\cw$ satisfies \Cref{asm:banach_finite_approx} for some integer $K\geq 1$ and some $R,\rho\geq 0$. Fix a resampling distribution~$\cq_n$ on $\seqn$, which satisfies \Cref{asm:cq_n}.
Then derandomized bagging, run with base algorithm~$\ca$ and resampling distribution~$\cq_n$, has mean-square stability $\beta^2$ with respect to~$\|\cdot\|$, with
\[
\beta^2
:= 1.6 \left[R\sqrt{\frac{2(1+\eta_n)\log(2K)}{n}\left(\frac{p_{\cq_n}}{1-p_{\cq_n}}+\frac{4\eta_n}{(1-p_{\cq_n})^2}\right)} + R\sqrt{\frac{1}{n} \cdot \frac{p_{\cq_n}}{1-p_{\cq_n}}} + 2\rho\cdot p_{\cq_n}\right]^2,\]
where  $\eta_n = \frac{H_n-1}{n-H_n}\asymp \frac{\log n}{n}$, for $H_n = 1 + \frac{1}{2}+\dots+\frac{1}{n}$ denoting the $n$th harmonic number.
\end{theorem}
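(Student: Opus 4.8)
The plan is to reduce the Banach-valued bound to the real-valued case — Theorem \ref{thm:hilbert} with $\ch=\R$, equivalently \cite[Theorem 8]{soloff2024bagging} — by coordinatizing the algorithm's output along the finitely many directions $v_1,\dots,v_K$ of Assumption \ref{asm:banach_finite_approx}, handling the residual cheaply, and then paying a $\log(2K)$ factor to recombine the resulting scalar estimates.

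The starting point is the leave-one-out identity that underlies the proof of Theorem \ref{thm:hilbert}: abbreviating $p=p_{\cq_n}$ and $f(r,\xi)=\ca(\cd^r;\xi)$, the symmetry and compatibility conditions of Assumption \ref{asm:cq_n} give, for every $i\in[n]$,
\[
\hat w(\cd)-\hat w(\cd^{\setminus i}) \;=\; \tfrac{1}{1-p}\,\EE_{r,\xi}\!\big[\,f(r,\xi)\,(\mathbf{1}_{i\in r}-p)\,\big],
\]
a Bochner-valued covariance of the resampled output with the inclusion indicator (well-defined since $\cw$ is bounded and $\cb$ is separable). Fixing a reference point $w_0\in\cw$, Assumption \ref{asm:banach_finite_approx} applied to $w-w_0$ (with a measurable selection) supplies real-valued algorithms $\mathcal A_1,\dots,\mathcal A_K$ with $\sum_j|\mathcal A_j(\cd';\xi')|\le R$ for all $(\cd',\xi')$, and a residual algorithm $\mathcal A_e$ with $\|\mathcal A_e(\cd';\xi')\|\le\rho$, such that $\ca(\cd';\xi')=w_0+\sum_j\mathcal A_j(\cd';\xi')\,v_j+\mathcal A_e(\cd';\xi')$. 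Bagging coordinatewise and using the identity,
\[
\hat w(\cd)-\hat w(\cd^{\setminus i}) \;=\; \sum_{j=1}^K \Delta_{ij}\,v_j \;+\; \big(\widehat{\mathcal A_e}(\cd)-\widehat{\mathcal A_e}(\cd^{\setminus i})\big), \qquad \Delta_{ij}:=\widehat{\mathcal A_j}(\cd)-\widehat{\mathcal A_j}(\cd^{\setminus i}),
\]
and since $\EE|\mathbf{1}_{i\in r}-p|=2p(1-p)$ and $\|v_j\|=1$, applying the same identity to $\mathcal A_e$ shows the residual term has norm at most $2\rho p$; after Minkowski's inequality in $\ell_2$ over $i$ this yields the $2\rho p$ summand of $\beta$.

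It remains to bound $\tfrac1n\sum_i\big\|\sum_j\Delta_{ij}v_j\big\|^2$. Since $\|v_j\|=1$ and any unit-norm functional evaluated on $(v_1,\dots,v_K)$ lands in $[-1,1]^K$, one has $\big\|\sum_j\Delta_{ij}v_j\big\|\le\max_{a\in\{\pm1\}^K}\langle a,\Delta_{i\cdot}\rangle$. For each \emph{fixed} sign vector $a$, $\langle a,\Delta_{i\cdot}\rangle$ is precisely the leave-one-out difference of derandomized bagging applied to the scalar algorithm $\sum_j a_j\mathcal A_j$, whose output range has radius at most $R$ by the joint budget $\sum_j|\mathcal A_j(\cd';\xi')|\le R$; hence the real-valued analysis behind Theorem \ref{thm:hilbert} controls $\tfrac1n\sum_i\langle a,\Delta_{i\cdot}\rangle^2$. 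The crux — and the step I expect to be the main obstacle — is to interchange the maximum over the direction $a$ with the average over the deleted index $i$: a crude union bound over all $2^K$ sign patterns would cost a factor $K$, so the argument must use the $\ell_1$-budget to reduce the effective complexity to the $2K$ extreme directions $\pm v_j$, and then apply a maximal inequality that converts the per-direction mean-square bounds into a uniform bound inflated only by $\log(2K)$. Getting this with sharp constants forces one to sharpen the per-direction scalar bound from a mean-square estimate to an exponential-tail estimate, and it is this sharpening — with its attendant second-moment and combinatorial bookkeeping over $i\in[n]$ — that introduces the corrections $(1+\eta_n)$, $\tfrac{4\eta_n}{(1-p)^2}$, and the harmonic number $H_n$. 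Finally I would collect the three contributions — the direction-uniform term, a single leftover direction term, and $2\rho p$ — add them and square, the constant $1.6$ absorbing the cross terms from this last triangle-inequality step.
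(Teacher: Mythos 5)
There is a genuine gap at the step you yourself flag as the crux. After coordinatizing the output as $\ca = w_0 + \sum_j \mathcal A_j v_j + \mathcal A_e$ and reducing to the array $\Delta_{ij}$, you need to pass from the per-direction bounds $\sup_{a}\frac1n\sum_i\langle a,\Delta_{i\cdot}\rangle^2\lesssim R^2\frac{p}{n(1-p)}$ to a bound on $\frac1n\sum_i\sup_a\langle a,\Delta_{i\cdot}\rangle^2=\frac1n\sum_i\|\Delta_{i\cdot}\|_1^2$, and the maximal inequality you invoke for this has nothing to act on: $\Delta_{ij}$ is a \emph{deterministic} array (the bag randomness has already been integrated out in the derandomized estimator), and the index $i$ is just a finite summation index with no independence or exchangeability structure. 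With only mean-square control per direction, the interchange can cost a factor of order $2^K$ (group the indices $i$ by the sign pattern $\mathrm{sign}(\Delta_{i\cdot})$ and apply the per-direction bound to each group), not $\log(2K)$; and there is no exponential-tail statement available ``over $i$'' to union-bound with. So the $\log(2K)$ factor is asserted rather than obtained.

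The paper's proof gets the logarithm from a different place. It never coordinatizes the algorithm globally; instead it writes $\sum_i\|\hat w-\hat w^{\setminus i}\|^2=\sum_i\|\hat w-\hat w^{\setminus i}\|\,\hat f_i(\hat w-\hat w^{\setminus i})$ using Hahn--Banach functionals $\hat f_i$, substitutes the covariance identity \eqref{eqn:w_hat_i_identity}, and applies \Cref{asm:banach_finite_approx} to the \emph{per-bag} differences $\hat w^r-\hat w$, so that the bound $\|t^r\|_1\le R$ produces $\max_{j\in[K]}\bigl|\sum_i c^{(j)}_i(\mathbf 1\{i\in r\}-p_{\cq_n})\bigr|$ \emph{inside} the expectation over $r\sim\cq_n$, with $c^{(j)}_i=\|\hat w-\hat w^{\setminus i}\|\hat f_i(v_j)$. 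The $\log(2K)$ then comes from a Chernoff/soft-max argument (\Cref{lem:Bernoulli_exch}, built on a Bernstein inequality for sampling without replacement) applied to these linear statistics of the exchangeable inclusion indicators---this is also where the $(1+\eta_n)$ and $4\eta_n/(1-p_{\cq_n})^2$ corrections genuinely originate, as you anticipated. Finally, because the direction vectors $c^{(j)}$ carry the unknown weights $\|\hat w-\hat w^{\setminus i}\|$, the resulting bound is self-referential, $S\le A\sqrt S+B$ with $B\le A^2/3$, and the constant $1.6$ comes from solving this quadratic---not from cross terms in a final triangle inequality. Your residual bound of $2\rho\, p_{\cq_n}$ and the reduction to the $2K$ extreme directions via the $\ell_1$ budget are both consistent with the paper, but the engine that delivers the logarithmic dependence on $K$ is missing from your argument as written.
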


As in the Hilbert space setting, we can also extend this theorem to verify tail stability and to prove a stability result for bagging with a finite $B$---see \Cref{sec:appendix_banach} for details.

\subsection{Examples}
Now we return to Problem Setting 2, as discussed in the motivation for the extension to Banach spaces, where $\ca$ returns a distribution (e.g., a posterior), to see what the new theorem implies in this setting.
To make this more concrete, we can consider two specific scenarios: a discrete setting and a continuous setting. We will see how, in each scenario, the new result of \Cref{thm:banach} gives a much stronger guarantee than what can be derived from the results of \Cref{sec:hilbert_theory}. For intuition, in this discussion, we treat $\frac{p_{\cq_n}}{1-p_{\cq_n}}$ as a constant.

\paragraph{Problem Setting 2(a): the discrete case}
First, consider a finite setting, where  $\ca$ returns a distribution on $K$ many values---for instance, if the output of $\ca$ specifies a Bayesian posterior over $K$ many possible models. In this scenario, $\hat{w}$ takes values in the simplex $\Delta_{K-1}\subset\R^K$, and so the total variation distance is given by $\dtv(w,w') = \frac{1}{2}\|w-w'\|_1$. 

Of course, since $\R^K$ is itself a Hilbert space (with norm $\|\cdot\|_{\ch} = \|\cdot\|_2$), we can apply our results from \Cref{sec:hilbert_theory}---but this will not give a very useful bound for the stability of derandomized bagging. Specifically, \Cref{thm:hilbert} tells us that  mean-square stability with respect to $\|\cdot\|_2$ holds, for $\beta^2\propto 1/n$. Since $\|\cdot\|_1\leq \sqrt{K}\|\cdot\|_2$ (the usual inequality relating $\ell_1$ norm to the $\ell_2$ norm in $\R^K$), this immediately implies that
\[
\frac{1}{n}\sum_{i\in[n]}\EE\left[\dtv^2\left(\hat{w}, \hat{w}^{\setminus i}\right)\right] \leq \frac{K}{4} \cdot \frac{1}{n}\sum_{i\in[n]}\EE[\|\hat{w} - \hat{w}^{\setminus i}\|^2_2] \lesssim K/n,\]
or in other words,  mean-square stability with respect to $\dtv$ holds, for $\beta^2\propto K/n$. But if the dimension $K$ is large relative to $n$, then this is not a very useful guarantee.

In contrast, let us now see what can be guaranteed via \Cref{thm:banach}. As mentioned above, \Cref{asm:banach_finite_approx} holds with $K$ equal to the dimension, and with $R=1$ and $\rho=0$. Then \Cref{thm:banach} tells us that mean-square stability with respect to $\|\cdot\|$ holds for $\beta^2\propto \frac{\log K}{n}$---a far better bound.

\paragraph{Problem Setting 2(b): the continuous case} Next, consider a continuous setting, where $\ca$ returns a density on $[0,1]$, given by some function $\hat{w}:[0,1]\rightarrow[0,\infty)$ satisfying $\int_{t=0}^1\hat{w}(t)\;\textnormal{d}t=1$. 
In this case we have $\dtv(w,w') = \frac{1}{2}\int_{t=0}^1 |w(t) - w'(t)|\;\textnormal{d}t$.

To formalize this setting, consider the Banach space 
\[\cb = L_1([0,1]) := \left\{f:[0,1]\to\R: \int_{t=0}^1|f(t)|\;\textnormal{d}t<\infty\right\},\] equipped with the norm $\|f\|= \frac{1}{2}\int_{t=0}^1 |f(t)|\;\textnormal{d}t$ (note that this differs from the usual $\ell_1$ norm by the factor~$\frac{1}{2}$, in order to agree with total variation distance). Suppose that our algorithm returns densities that are further constrained to be $L$-Lipschitz, so that we have
\[\cw = \left\{w:[0,1]\to[0,\infty) : \int_{t=0}^1 w(t) = 1, \ \sup_{t\neq t'}\frac{|w(t)-w(t')|}{|t-t'|}\leq L\right\}. \]

As for Problem Setting 2(a), we again begin by asking whether the Hilbert space result of \Cref{thm:hilbert} might already be sufficient in this setting. Indeed, while $\cb = L_1([0,1])$ is not a Hilbert space, the Lipschitz constraint on $\cw$ means that $\int_{t=0}^1 |w(t)|^2\;\textnormal{d}t<\infty$ for all $w\in\cw$, meaning that $\cw$ can be viewed as a subset of the Hilbert space $L_2([0,1])$, equipped with norm $\|f\|_\ch = \|f\|_{L_2} = (\int_{t=0}^1 |f(t)|^2\;\textnormal{d}t)^{1/2}$. 
A straightforward calculation verifies that $\radius_\ch(\cw)\propto L^{1/4}$,
meaning that by \Cref{thm:hilbert}, derandomized bagging satisfies  mean-square stability with respect to $\|\cdot\|_{L_2}$ for $\beta^2\propto L^{1/2}/n$. Since for this function space we have $\|\cdot\|_{L_1}\leq \|\cdot\|_{L_2}$, this immediately implies that  mean-square stability with respect to $\dtv$ also holds with parameter $\beta^2\propto L^{1/2}/n$. However, if the Lipschitz constant $L$ is large, this may not be a satisfying result.

Now we turn to our new result, \Cref{thm:banach}, to see if the stability guarantee can be improved. First, observe that \Cref{asm:banach_finite_approx} holds for any $K\geq 1$ if we take $R=1$ and $\rho \propto L/K$. This holds because we can take $v_j$ to be the function $v_j(t) = K \cdot \mathbf{1}\{\frac{j-1}{K} \leq t \leq \frac{j}{K}\}$, for each $j\in[K]$---then for any $L$-Lipschitz densities $w,w'$, the $2L$-Lipschitz difference $w-w'$ can be approximated with a piecewise constant function, i.e., a linear combination of these $v_j$'s. Choosing $K\propto Ln$, 
we apply \Cref{thm:banach}, which then leads to a guarantee on the  mean-square stability with respect to $\dtv$, with $\beta^2\propto \frac{\log(Ln)}{n}$. Again, as for the discrete case, if $L$ is large (i.e., $L\gg \log^2 n$) then this is a much stronger bound than the one we can obtain via the Hilbert space result. 

\paragraph{Summary of examples}
The following table summarizes our results for the two settings, in terms of the stability guarantee with respect to $\dtv$. The improved scaling in the right column highlights the utility of \Cref{thm:banach} in extending our results to a broader range of settings.

\begin{center}
\begin{tabular}{c|c|c}
    Output space $\cw$ & \begin{tabular}{@{}c@{}}Stability guarantee for $\dtv$\\ via \Cref{thm:hilbert}\\ (Hilbert space)\end{tabular} & \begin{tabular}{@{}c@{}}Stability guarantee for $\dtv$\\ via \Cref{thm:banach}\\ (Banach space)\end{tabular}\\\hline \hline
    \begin{tabular}{@{}c@{}}Problem Setting 2(a):\\
    distributions on $\{1,\dots,K\}$\end{tabular} & $\beta^2\propto \frac{K}{n}$& $\beta^2\propto \frac{\log K}{n}$\\\hline
    \begin{tabular}{@{}c@{}}Problem Setting 2(b):\\$L$-Lipschitz densities on $[0,1]$\end{tabular} &$\beta^2\propto \frac{L^{1/2}}{n}$ & $\beta^2\propto \frac{\log(Ln)}{n}$\\
\end{tabular}
\end{center}
~

\begin{remark}[On the necessity of \Cref{asm:banach_finite_approx}]\label{rmk:banach_finite_approx}
    Our stability guarantee for general Banach spaces essentially requires that $\cw$ lies (approximately) in a convex hull of $2K$ many points---namely, $Rv_1,-Rv_1$, \ldots, $Rv_K,-Rv_K$. In particular, this implies that $\cw$ is (approximately) contained in a \emph{finite-dimensional} subspace of $\cb$.  
    In contrast, our Hilbert space results apply regardless of dimensionality---while we do require $\cw$ to be bounded via its radius, $\radius_\ch(\cw)$, there is no implicit cost to the dimension of $\ch$ in the results of \Cref{thm:hilbert}.
    It turns out that some assumption of this form is actually required for the Banach case---see \Cref{prop:banach_counterexample} (in \Cref{sec:finite-dim-necessary}) for an explicit counterexample verifying stability fails without this type of assumption.
\end{remark}

\section{Experiments}\label{sec:apps}

In this section, we study the stability of subbagging in experiments across several simulation and real data settings.
Code to reproduce all experiments is available at 

\begin{center}
    \url{https://github.com/jake-soloff/stability-resampling-experiments}.
\end{center}

\subsection{Data and Methods}
\label{sec:data}
We start by presenting an overview of our four experiments. Experiments 1, 2, and 3 each examine stability in a Hilbert space, with respect to the appropriate Hilbert norm $\|\cdot\|_\ch$ (as studied in \Cref{sec:hilbert_theory}). In contrast, the setting of Experiment 4 is a Banach space (as in \Cref{sec:banach_theory}). 
    \paragraph{Experiment 1 (Estimating a regression function using bagged decision trees)} We draw data from the following data-generating process:
\begin{align*}
    (X_i,\alpha_i,\gamma_i) &\simiid \textnormal{Uniform}([0, 1]^d)\times \textnormal{Uniform}([-.25, .25])\times \textnormal{Uniform}([0, 1]),\\
    \tilde{Y}_i &= \sum_{j=1}^d \sin\left(\frac{X_{ij}}{j}\right) + \alpha_i \mathbf{1}\left\{i = 1\Mod 3\right\} + \gamma_i \mathbf{1}\left\{i = 1\Mod 4\right\},
\end{align*}
with $n=500$ and~$d = 40$. We then set $Y_i = \frac{\tilde{Y}_i - \min_j \tilde{Y}_j}{\max_j \tilde{Y}_j - \min_j\tilde{Y}_j}$ so that each response $Y_i$ is in the unit interval $[0,1]$. Note that the algorithm has access to the observed data~$\mathcal{D} = (X_i,Y_i)_{i\in [n]}$, i.e., $\alpha_i$ and $\gamma_i$ are latent variables used only to generate the data~$\mathcal{D}$. We apply \verb+sklearn.tree.DecisionTreeRegressor+ to train the regression trees, setting \verb+max_depth=50+ and leaving all other parameters at their default values. We evaluate the stability of the learned function in the $L_2$ norm $\|\hat{w}\|_{L_2} = (\int_{[0,1]^d}\hat{w}(x)^2\textnormal{d}x)^{1/2}$.
\paragraph{Experiment 2 (Synthetic controls real data example)} For this experiment, we reproduce  \cite{abadie2015comparative}’s SCM analysis on the 1990 reunification of Germany, as discussed above in \Cref{sec:problem_settings} (see Problem Setting 3).
The data contains $n=16$ countries (aside from Germany), with $6$ predictors measured for each country (pre-treatment averages of various macroeconomic factors such as inflation and trade openness---see Table 1 of \cite{abadie2015comparative} for more details). SCM then estimates a weighted combination of these $n$ countries that represents a `synthetic West Germany’, meaning that the output of the algorithm is a vector $\hat{w}\in\Delta_{n-1}$. When we run SCM on a subset of the $n=16$ countries, we assign a weight of $0$ to each omitted country. We run SCM using the Python package \texttt{pysyncon} and make extensive use of their reproduction of the original study \cite{abadie2015comparative}. We evaluate the stability of these weights in the Euclidean norm~$\|\cdot\|_2$. 

\paragraph{Experiment 3 (Least-squares spectrum analysis)} We study function estimation in a Sobolev space, corresponding to the Hilbert space $W^{s,2}(\R)$ with smoothness parameter $s=2$, 
with the corresponding norm
\begin{align}\label{def:sobolev-norm}
    \|f\|_{s,2} := \sqrt{\sum_{k=-\infty}^\infty (1+k^2)^s\cdot |\hat{f}(k)|^2},
\end{align}
where $\hat{f}(k)$ denotes the $k$th Fourier coefficient, indexed by $k\in \mathbb{Z}$. We wish to recover $f$, which is equivalent to estimating the Fourier coefficients. We restrict our estimation problem to \emph{even functions}, so that we may constrain the Fourier coefficients to be real valued.

Least squares spectral analysis (LSSA) is one method for estimating the Fourier coefficients of a function from irregularly spaced samples \cite{vanivcek1969approximate}. Given a data set $\cd = (X_i, Y_i)_{i\in [n]}$, we first construct a countable set of features $\Psi_{ik} = \exp\left(jkX_i\right)$, where $j = \sqrt{-1}$, $i\in [n]$ and $k\in \mathbb{Z}$. The base algorithm $\ca(\cd)$ returns a function $\hat{w} : [-1,1]\to \R$, given by
\[
\hat{w}(x)
= \sum_{k=-\infty}^\infty \hat{a}_k\exp(jkx), 
\qquad\textnormal{with}\qquad
\hat{a} = \argmin_{a\in \mathcal{C}_n(R)} \sum_{i=1}^n\left(Y_i - \sum_{k=-\infty}^\infty\Psi_{ik} a_k\right)^2,
\]
and where the constraint set is given by 
\[
\mathcal{C}_n(R) := \left\{(\ldots, a_{-1}, a_0, a_1,\ldots) : a_k\in \R, a_k = 0 \,\forall\,|k| > \lceil{n/2} \rceil, \sum_{k=-\infty}^\infty (1+k^2)^2a_k^2\le R^2\right\}.
\]

For our simulation, we sample $n=200$ random variables $X_1,\ldots,X_n\in [-1,1]$ iid from a non-uniform density $g_X(x) = -\frac{1}{2}\log|x|$. Then, the corresponding responses are given by $Y_i = f^*(X_i) + \zeta_i$, where $\zeta_i\simiid \mathcal{N}(0,.01)$ and $f^*(x) = \sin\left(\frac{2}{1-x} + \frac{2}{1+x}\right)$. We run the base algorithm above using the Python package $\texttt{cvxpy}$, setting $R=10$. We evaluate the stability of the learned function in the Sobolev norm $\|\cdot\|_{2,2}$ (see \Cref{def:sobolev-norm} above).

\paragraph{Experiment 4 (Stability of the softmax function)} We simulate a binary $n\times d$ data matrix $X\in \{0,1\}^{n\times d}$ using $X_{ij}\simiid \textnormal{Bern}(0.2)$, where $n=2000$ and $d=100$. The base algorithm takes the softmax of the column sums $\mathcal{A}(\cd):= \sigma(\mathbf{1}_n^\top X)\in \Delta_{d-1}$, i.e.
\[
\mathcal{A}(\cd)_j =  \frac{\exp\left(\sum_{i=1}^n X_{ij}\right)}{\sum_{k=1}^d \exp\left(\sum_{i=1}^n X_{ik}\right)}.
\]
We evaluate  stability with respect to the total variation distance $\dtv$. (As discussed in \Cref{sec:banach_theory}, the $\dtv$ norm is not a Hilbert norm---that is, unlike the first three experiments, stability guarantees for Experiment 4 hold only due to the extension to the Banach space setting given in \Cref{thm:banach}.)

\begin{figure}[ht!]
    \centering
    Experiment 1: Regression trees
    \includegraphics{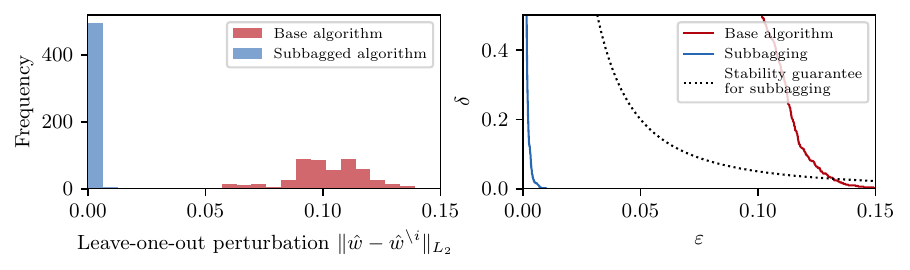}
    Experiment 2: Synthetic controls, German reunification example \citep{abadie2015comparative}
    \includegraphics{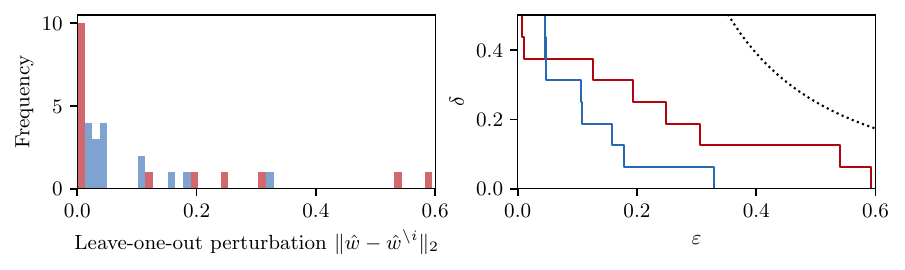}
    Experiment 3: Least-squares spectral analysis
    \includegraphics{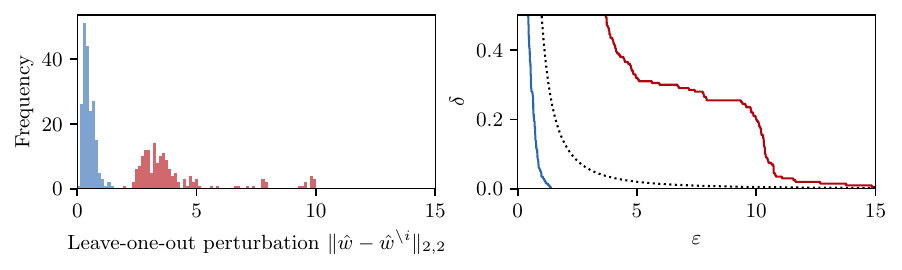}
    Experiment 4: Softmax
    \includegraphics{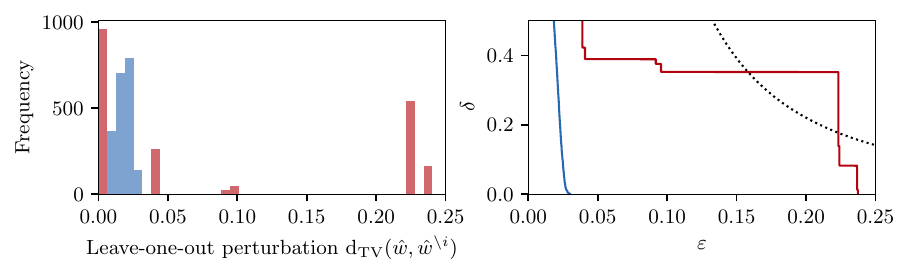}
    \caption{Simulation results comparing the stability of subbagging to that of the corresponding base algorithm under the given norm. Left: Histogram of leave-one-out perturbations. Right: for each $\varepsilon$, the smallest $\delta$ such that the algorithm has tail stability $(\varepsilon, \delta)$ in the sense of~\Cref{def:stability}. Higher curves thus represent greater instability. In the right panels we also plot the stability guarantee of \Cref{thm:hilbert} for Experiments~1--3 and \Cref{thm:banach} for Experiment~4 (the guarantees for derandomized bagging). For all four experiments, $m = n/2$ and $B = 10,000$.}
    \label{fig:experiments-overview}
\end{figure}

\subsection{Results}

Our experiments compare the stability of the base algorithm $\ca$ to the stability of subbagging with $m=n/2$ and $B=10,000$.
Our results are shown in \Cref{fig:experiments-overview} and \Cref{tab:results}. The left panels of \Cref{fig:experiments-overview} show the histogram of leave-one-out perturbations $\left\|\hat{w} - \hat{w}^{\setminus i}\right\|$ for $i\in\left\{1,\ldots,n\right\}$. In the right panels of \Cref{fig:experiments-overview}, for a fixed data set and algorithm, we measure stability by plotting, for each value of $\varepsilon$, the smallest value of $\delta$ such that the algorithm is $\left(\varepsilon,\delta\right)$-stable: 
\[\delta = \frac{1}{n}\sum_{i=1}^n1_{\|\hat{w}-\hat{w}^{\setminus i}\| \ge \varepsilon}.\]
Furthermore, in \Cref{tab:results}, we record the mean-square stability 
\[
\beta^2 := \frac{1}{n}\sum_{i=1}^n\|\hat{w}-\hat{w}^{\setminus i}\|^2.
\]

For all four experiments, we observe that the subbagged algorithm has better stability than the base algorithm, both in terms of the mean-square stability parameter $\beta^2$ and in terms of the tail stability parameters (across all, or nearly all, values of $\varepsilon$).

\begin{table}[ht]\centering
\begin{tabular}{c|c|c}
    Setting & \begin{tabular}{@{}c@{}}Mean-square stability $\beta^2$\\ for base algorithm\end{tabular} & \begin{tabular}{@{}c@{}}Mean-square stability $\beta^2$\\ for subbagged algorithm\end{tabular} \\
    \hline \hline
    Regression trees & $0.0107$ & $0.0000047$ \\
    \hline
    Synthetic controls (SCM) & $0.0532$ & $0.0125$ \\
    \hline
    Spectral analysis (LSSA) & $40.033$ & $0.0793$ \\
    \hline
    Softmax & $0.0186$ & $0.0006$
\end{tabular}
\label{tab:results}
\caption{Results for mean-square stability.}
\end{table}

\section{Conclusion}\label{sec:conclusion}

In supervised learning, algorithmic stability plays a fundamental role in generalization theory \citep{bousquet2002stability,mukherjee2006learning,shalev2010learnability}. More broadly, various forms of  stability are fundamental across applied statistics, in part for their role in establishing reproducibility and interpretability \citep{yu2020veridical,yu2024}. This work provides general, finite sample theoretical guarantees supporting the use of bagging and related techniques as general-purpose stabilizers. Our stability guarantees represent a major generalization of~\cite{soloff2024bagging}---which studied algorithms with outputs on the real line---as our results apply to infinite dimensional settings.

An important remaining open question is how to define and guarantee stability for algorithms with discrete outputs.
In the synthetic controls setting, for example, our theory shows that bagging can stabilize the SCM \emph{weights}. 
However, our results do not imply stability in the resulting \emph{support}---that is, the set of control units that receive nonzero support. 
In a sense, the goal of sparsity is incompatible with stability \citep{xu2011sparse}, so we need a new framework to reconcile these two desiderata. We leave these questions for future work.

\subsection*{Acknowledgements} 

 We gratefully acknowledge the support of the National Science Foundation via grant DMS-2023109 DMS-2023109. R.F.B. and J.A.S. gratefully acknowledge the support of the Office of Naval Research via grant N00014-20-1-2337. R.M.W. gratefully acknowledges the support of NSF DMS-2235451, Simons Foundation MP-TMPS-00005320, and AFOSR FA9550-18-1-0166.

\appendix
\crefalias{section}{appendix}
\section{Proofs for the Hilbert space setting}\label{sec:appendix_hilbert}

\begin{proof}[Proof of \Cref{thm:hilbert}] 
For each $r\in\seqn$, define
$\hat{w}^r = \EE_\xi[\ca(\cd^r;\xi)]$,
the expected output of the randomized algorithm for the data set given by $\cd^r$. From this point on, we write $\EE[\cdot]$ to denote expectation with respect to $r\sim\cq_n$. We then have $\EE[\hat{w}^r]=\hat{w}$ and 
\[\hat{w}^{\setminus i} = \EE\left[\hat{w}^r \mid i\not\in r\right] = \frac{1}{1-p_{\cq_n}} \EE\left[\hat{w}^r\cdot\mathbf{1}\{i\not\in r\}\right]\]
by construction,
and therefore, a straightforward calculation shows that
\begin{equation}\label{eqn:w_hat_i_identity}\hat{w} - \hat{w}^{\setminus i} = \frac{1}{1-p_{\cq_n}} \EE\left[(\hat{w}^r - \hat{w})\cdot\left(\mathbf{1}\{i\in r\} - p_{\cq_n} \right)\right].\end{equation}
Therefore,
\begin{align*}
    \sum_{i\in[n]} \|\hat{w}-\hat{w}^{\setminus i}\|^2_\ch
    &= \sum_{i\in[n]} \left\langle \hat{w}-\hat{w}^{\setminus i}, \frac{1}{1-p_{\cq_n}} \EE\left[(\hat{w}^r-\hat{w})\cdot\left(\mathbf{1}\{i\in r\} - p_{\cq_n}\right)\right] \right\rangle\\
    &= \frac{1}{1-p_{\cq_n}} \EE\left[\left\langle \sum_{i\in[n]} (\hat{w}-\hat{w}^{\setminus i})\cdot\left(\mathbf{1}\{i\in r\} - p_{\cq_n}\right), \hat{w}^r- \hat{w} \right\rangle\right]\\
    &\leq \frac{1}{1-p_{\cq_n}} \EE\left[\left\| \sum_{i\in[n]} (\hat{w}-\hat{w}^{\setminus i})\cdot\left(\mathbf{1}\{i\in r\} - p_{\cq_n}\right)\right\|_{\ch}^2\right]^{1/2}\cdot\EE\left[\left\| \hat{w}^r- \hat{w} \right\|_{\ch}^2\right]^{1/2}\\
    &\leq \frac{\radius_\ch(\cw)}{1-p_{\cq_n}} \EE\left[\left\| \sum_{i\in[n]} (\hat{w}-\hat{w}^{\setminus i})\cdot\left(\mathbf{1}\{i\in r\} - p_{\cq_n}\right)\right\|_{\ch}^2\right]^{1/2}.
\end{align*}
The last step holds because we have
\[\EE\left[\|\hat{w}^r - \hat{w}\|_\ch^2\right]
= \inf_{w\in\cw}\EE\left[\|\hat{w}^r - w\|_\ch^2\right] \leq \inf_{w\in\cw}\sup_{w'\in\cw}\|w'-w\|_{\ch}^2 = \radius^2_{\ch}(\cw),\]
where the first equality holds since $\hat{w} = \EE[\hat{w}^r]$ minimizes mean-square error.

To calculate the last remaining expected value, we first note that since we have assumed symmetry and nonpositive covariance for $\cq_n$ (see \Cref{asm:cq_n}), we have $\textnormal{Cov}\left(\mathbf{1}\{i\in r\},\mathbf{1}\{j\in r\}\right) = -q$ for all $i\neq j$, for some $q\geq0$. Moreover, since the covariance matrix of the random vector $(\mathbf{1}\{i\in r\})_{i\in[n]}$ must be positive semidefinite, we must have $q\leq \frac{p_{\cq_n}(1-p_{\cq_n})}{n-1}$. 
We then have
\begin{align*}
    &\EE\left[\left\| \sum_{i\in[n]} (\hat{w}-\hat{w}^{\setminus i})\cdot\left(\mathbf{1}\{i\in r\} - p_{\cq_n}\right)\right\|_{\ch}^2\right]\\
    &=\sum_{i\in[n]}\textnormal{Var}(\mathbf{1}\{i\in r\}) \cdot \|\hat{w}-\hat{w}^{\setminus i}\|^2_\ch +  \sum_{i\neq j\in[n]}\textnormal{Cov}\left(\mathbf{1}\{i\in r\},\mathbf{1}\{j\in r\}\right) \cdot \langle \hat{w}-\hat{w}^{\setminus i},\hat{w}-\hat{w}^{\setminus j}\rangle\\
    &=\sum_{i\in[n]} p_{\cq_n}(1-p_{\cq_n}) \cdot \|\hat{w}-\hat{w}^{\setminus i}\|^2_{\ch} + \sum_{i\neq j\in[n]} (-q) \cdot \langle \hat{w}-\hat{w}^{\setminus i},\hat{w}-\hat{w}^{\setminus j}\rangle\\
    &=  \left(p_{\cq_n}(1-p_{\cq_n}) + q\right) \cdot \sum_{i\in[n]}\|\hat{w}-\hat{w}^{\setminus i}\|^2_{\ch} - q\cdot \left\|\sum_{i\in[n]}\hat{w}-\hat{w}^{\setminus i}\right\|^2_{\ch}\\
    &\leq \frac{n}{n-1}\cdot p_{\cq_n}(1-p_{\cq_n}) \cdot \sum_{i\in[n]} \|\hat{w}-\hat{w}^{\setminus i}\|^2_{\ch}.
\end{align*}
Combining everything, then, we have
\[\sum_{i\in[n]}\|\hat{w}-\hat{w}^{\setminus i}\|^2_{\ch}\leq \frac{\radius_\ch(\cw)}{1-p_{\cq_n}} \left( \frac{n}{n-1}\cdot p_{\cq_n}(1-p_{\cq_n}) \cdot \sum_{i\in[n]}\|\hat{w}-\hat{w}^{\setminus i}\|^2_{\ch}\right)^{1/2},\]
which yields
\[\frac{1}{n}\sum_{i\in[n]}\|\hat{w}-\hat{w}^{\setminus i}\|^2_{\ch}\leq \frac{\radius^2_\ch(\cw)}{n-1}\cdot\frac{p_{\cq_n}}{1-p_{\cq_n}}\]
after rearranging terms.
\end{proof}

\begin{proof}[Proof of \Cref{cor:hilbert_finite_B}]
Let $\hat{w}$ and $\hat{w}^{\setminus i}$ be the output of the bagged algorithm (as in \Cref{def:bag}), and let $\tilde{w}$ and $\tilde{w}^{\setminus i}$ be the corresponding derandomized bagged estimators (as in \Cref{def:derand-bag}). For each $i\in[n]$, we have
\begin{multline*}\EE\left[\|\hat{w} - \hat{w}^{\setminus i}\|^2_\ch\right]
= \EE\left[\|(\hat{w} - \tilde{w})  - (\hat{w}^{\setminus i} - \tilde{w}^{\setminus i}) + (\tilde{w} - \tilde{w}^{\setminus i})\|^2_\ch\right]\\
= \EE\left[\|(\hat{w} - \tilde{w})  - (\hat{w}^{\setminus i} - \tilde{w}^{\setminus i}) \|^2_\ch \right]+ \|\tilde{w} - \tilde{w}^{\setminus i}\|^2_\ch,\end{multline*}
since $\tilde{w} - \tilde{w}^{\setminus i}$ is nonrandom, while $(\hat{w} - \tilde{w})  - (\hat{w}^{\setminus i} - \tilde{w}^{\setminus i})$ has mean zero. Next,
\[\EE\left[\|(\hat{w} - \tilde{w})  - (\hat{w}^{\setminus i} - \tilde{w}^{\setminus i}) \|^2_\ch \right]
\leq 2\EE\left[\|\hat{w} - \tilde{w}\|^2_\ch\right] + 2\EE\left[\|\hat{w}^{\setminus i} - \tilde{w}^{\setminus i} \|^2_\ch \right].\]
We next have $\hat{w} = \frac{1}{B}\sum_{b=1}^B\ca(\cd^{r_b};\xi_b)$. Note that the random variables $\ca(\cd^{r_b};\xi_b)\in\ch$ are i.i.d., with mean $\tilde{w}$, and with $\|\ca(\cd^{r_b};\xi_b) - \tilde{w}\|_{\ch}\leq 2\radius_{\ch}(\cw)$ almost surely. Applying \Cref{thm:hayes}, 
\[\EE\left[\|\hat{w} - \tilde{w}\|^2_\ch\right] \leq \frac{4e^2 \radius^2_{\ch}(\cw)}{B}.\]
By an identical argument, the same holds for the leave-one-out outputs, $\hat{w}^{\setminus i}$ and $ \tilde{w}^{\setminus i}$. Therefore,
\[\EE\left[\|\hat{w} - \hat{w}^{\setminus i}\|^2_\ch\right] \leq  \|\tilde{w} - \tilde{w}^{\setminus i}\|^2_\ch+ \frac{16e^2 \radius^2_{\ch}(\cw)}{B}.\]
Finally, $\|\tilde{w} - \tilde{w}^{\setminus i}\|^2_\ch$ is bounded by \Cref{thm:hilbert}.
\end{proof}

\section{Proofs and additional results for the Banach space setting}\label{sec:appendix_banach}

\subsection{Proof of main theorem}
\begin{proof}[Proof of \Cref{thm:banach}]
Let $\|\cdot\|_*$ represent the dual norm to $\|\cdot\|$.
First, for each $i$, as a consequence of the Hahn--Banach theorem (see, e.g., \cite{folland1999analysis}, Theorem~5.8), there is a linear operator $\hat{f}_i$ on $\cb$ such that $\|\hat{f}_i\|_*=1$, and \[\hat{f}_i(\hat{w}-\hat{w}^{\setminus i}) = \|\hat{w}-\hat{w}^{\setminus i}\|.\]
We then have
\begin{align*}
    &\sum_{i\in[n]} \|\hat{w}-\hat{w}^{\setminus i}\|^2
    =\sum_{i\in [n]}\|\hat{w}-\hat{w}^{\setminus i}\| \cdot \hat{f}_i(\hat{w}-\hat{w}^{\setminus i})\\
    &=\sum_{i\in[n]}\|\hat{w}-\hat{w}^{\setminus i}\| \cdot \hat{f}_i\left(\frac{1}{1-p_{\cq_n}} \EE\left[(\hat{w}^r - \hat{w})\cdot\left(\mathbf{1}\{i\in r\} - p_{\cq_n} \right)\right]\right)\textnormal{ as calculated in~\eqref{eqn:w_hat_i_identity}}\\
    &=\sum_{i\in[n]}\|\hat{w}-\hat{w}^{\setminus i}\| \cdot \frac{1}{1-p_{\cq_n}} \EE\left[\hat{f}_i(\hat{w}^r - \hat{w})\cdot\left(\mathbf{1}\{i\in r\} - p_{\cq_n} \right)\right]\textnormal{ since $\hat{f}_i$ is linear}\\
    &=\frac{1}{1-p_{\cq_n}}\cdot \EE\left[\sum_{i\in[n]}\|\hat{w}-\hat{w}^{\setminus i}\| \cdot  \hat{f}_i(\hat{w}^r - \hat{w})\cdot\left(\mathbf{1}\{i\in r\} - p_{\cq_n} \right)\right].
\end{align*}

Next, by \Cref{asm:banach_finite_approx}, for each $r\in\seqn $ we can find some $t^r=(t^r_1,\dots,t^r_K)\in\R^K$ with $\|t^r\|_1\leq R$ and
$\Big\|(\hat{w}^r - \hat{w}) -   \sum_{j=1}^K t^r_j v_j\Big\|\leq \rho$.
Then, since $\hat{f}_i$ is linear and has unit dual norm $\|\hat{f}_i\|_*=1$,
$\left|\hat{f}_i(\hat{w}^r - \hat{w}) - \sum_{j=1}^K t^r_j\hat{f}_i(v_j)\right|\leq \rho$
for each $i\in[n]$, and so
\begin{align*}
    \sum_{i\in[n]} \|\hat{w}-\hat{w}^{\setminus i}\|^2
    &=\frac{1}{1-p_{\cq_n}}\cdot \EE\left[\sum_{i\in[n]}\|\hat{w}-\hat{w}^{\setminus i}\| \cdot  \hat{f}_i(\hat{w}^r - \hat{w})\cdot\left(\mathbf{1}\{i\in r\} - p_{\cq_n} \right)\right]\\
    &\leq \frac{1}{1-p_{\cq_n}}\cdot \EE\left[\sum_{j\in[K]} t^r_j \cdot \sum_{i\in[n]}\|\hat{w}-\hat{w}^{\setminus i}\| \cdot  \hat{f}_i(v_j)\cdot\left(\mathbf{1}\{i\in r\} - p_{\cq_n} \right)\right] \\
    &\hspace{1in}+ \frac{\rho}{1-p_{\cq_n}}\cdot \EE\left[\sum_{i\in[n]}\left|\|\hat{w}-\hat{w}^{\setminus i}\| \cdot\left(\mathbf{1}\{i\in r\} - p_{\cq_n} \right)\right|\right]\\
    &\leq \frac{R}{1-p_{\cq_n}}\cdot \EE\left[\max_{j\in[K]}\left|\sum_{i\in[n]}\|\hat{w}-\hat{w}^{\setminus i}\| \cdot  \hat{f}_i(v_j)\cdot\left(\mathbf{1}\{i\in r\} - p_{\cq_n} \right)\right|\right] \\
    &\hspace{1in}+ \frac{\rho}{1-p_{\cq_n}}\cdot \EE\left[\sum_{i\in[n]}\left|\|\hat{w}-\hat{w}^{\setminus i}\| \cdot\left(\mathbf{1}\{i\in r\} - p_{\cq_n} \right)\right|\right],
\end{align*}
where for the last step, we use the fact that $\|t^r\|_1\leq R$ for any $r$. Since $\EE[|\mathbf{1}\{i\in r\} - p_{\cq_n}|] = 2p_{\cq_n}(1-p_{\cq_n})$ for each $i$, and $\sum_{i\in[n]} \|\hat{w}-\hat{w}^{\setminus i}\| \leq \sqrt{n}\cdot \sqrt{\sum_{i\in[n]}\|\hat{w}-\hat{w}^{\setminus i}\|^2}$, we can rewrite this as
\begin{align*}
    \sum_{i\in[n]} \|\hat{w}-\hat{w}^{\setminus i}\|^2
\leq \frac{R}{1-p_{\cq_n}}\cdot &\EE\left[\max_{j\in[K]}\left|\sum_{i\in[n]}\|\hat{w}-\hat{w}^{\setminus i}\| \cdot  \hat{f}_i(v_j)\cdot\left(\mathbf{1}\{i\in r\} - p_{\cq_n} \right)\right|\right] \\
&+ 2\rho\sqrt{n}\cdot p_{\cq_n} \cdot \sqrt{\sum_{i\in[n]}\|\hat{w}-\hat{w}^{\setminus i}\|^2}.
\end{align*}

Next, define
$\hat{p}^r = \frac{1}{n}\sum_{i\in[n]}\mathbf{1}\{i\in r\}$, 
so that $\EE[\hat{p}^r] = p_{\cq_n}$. Using \Cref{asm:cq_n} we can verify that 
$\textnormal{Var}(\hat{p}^r) \leq \frac{p_{\cq_n}(1-p_{\cq_n})}{n}$.
Then, since $|\hat{f}_i(v_j)|\leq 1$ for all $i,j$,
\begin{multline*}
    \EE\left[\max_{j\in[K]}\left|\sum_{i\in[n]}\|\hat{w}-\hat{w}^{\setminus i}\| \cdot  \hat{f}_i(v_j)\cdot\left(\hat{p}^r - p_{\cq_n} \right)\right|\right]
    =\EE\left[\max_{j\in[K]}\left|\sum_{i\in[n]}\|\hat{w}-\hat{w}^{\setminus i}\| \cdot  \hat{f}_i(v_j)\right| \cdot |\hat{p}^r - p_{\cq_n}|\right]\\
    \leq \sum_{i\in[n]}\|\hat{w}-\hat{w}^{\setminus i}\| \cdot \EE\left[|\hat{p}^r - p_{\cq_n}|\right]
    \leq \sqrt{n} \cdot \sqrt{\sum_{i\in[n]}\|\hat{w}-\hat{w}^{\setminus i}\|^2} \cdot \sqrt{\textnormal{Var}(\hat{p}^r)}\\
    \leq \sqrt{p_{\cq_n}(1-p_{\cq_n})} \cdot \sqrt{\sum_{i\in[n]}\|\hat{w}-\hat{w}^{\setminus i}\|^2}.
\end{multline*}

Combining with the work above, then,
\begin{multline*}\sum_{i\in[n]} \|\hat{w}-\hat{w}^{\setminus i}\|^2
\leq \frac{R}{1-p_{\cq_n}}\cdot \EE\left[\max_{j\in[K]}\left|\sum_{i\in[n]}\|\hat{w}-\hat{w}^{\setminus i}\| \cdot  \hat{f}_i(v_j)\cdot\left(\mathbf{1}\{i\in r\} - \hat{p}^r \right)\right|\right] \\
{}+R \sqrt{\frac{p_{\cq_n}}{1-p_{\cq_n}}} \cdot \sqrt{\sum_{i\in[n]}\|\hat{w}-\hat{w}^{\setminus i}\|^2} + 2\rho\sqrt{n}\cdot p_{\cq_n} \cdot \sqrt{\sum_{i\in[n]}\|\hat{w}-\hat{w}^{\setminus i}\|^2}.\end{multline*}

Now we bound the remaining expected value. Let $c^{(j)}\in\R^n$ have entries $c^{(j)}_i = \|\hat{w}-\hat{w}^{\setminus i}\| \cdot \hat{f}_i(v_j)$,  and observe that $\|c^{(j)}\|^2_2 \leq \sum_{i\in[n]} \|\hat{w}-\hat{w}^{\setminus i}\|^2$ and $\|c^{(j)}\|_{\infty} \leq \max_{i\in[n]}\|\hat{w} - \hat{w}^{\setminus i}\|$, for each $j$, because $|\hat{f}_i(v_j)|\leq 1$. Moreover, for any $i$, $\hat{w}-\hat{w}^{\setminus i} = p_{\cq_n}(\hat{w}^i - \hat{w}^{\setminus i})$, where $\hat{w}^i = \EE[\hat{w}^r\mid i\in r]$, and so $\|\hat{w}-\hat{w}^{\setminus i}\|\leq p_{\cq_r} R$ (by \Cref{asm:banach_finite_approx}), and so $\|c^{(j)}\|_{\infty} \leq p_{\cq_n}R$ for all $j$.
Applying \Cref{lem:Bernoulli_exch} (with vectors $c^{(1)},\dots,c^{(K)},-c^{(1)},\dots,-c^{(K)}$), then,
\begin{multline*}\EE\left[ \max_{j\in[K]}\left|\sum_{i\in[n]}\|\hat{w}-\hat{w}^{\setminus i}\| \cdot  \hat{f}_i(v_j)\cdot\left(\mathbf{1}\{i\in r\} - \hat{p}^r \right)\right| \, \middle| \, \hat{p}^r\right]\\\leq \sqrt{\sum_{i\in[n]} \|\hat{w}-\hat{w}^{\setminus i}\|^2}\cdot \sqrt{2(1+\eta_n)(\hat{p}^r(1-\hat{p}^r)+4\eta_n)\log(2K)} + \frac{2}{3}(1+\eta_n) \log(2K)\cdot p_{\cq_n}R.\end{multline*}
Finally, since $t\mapsto \sqrt{t(1-t)+c}$ is concave for $t\in[0,1]$ and for fixed $c\geq 0$, marginalizing over $\hat{p}^r$, and applying Jensen's inequality, yields
\begin{align*}
\EE&\left[ \max_{j\in[K]}\left|\sum_{i\in[n]}\|\hat{w}-\hat{w}^{\setminus i}\| \cdot  \hat{f}_i(v_j)\cdot\left(\mathbf{1}\{i\in r\} - \hat{p}^r \right)\right|\right]\\
&\leq \sqrt{\sum_{i\in[n]} \|\hat{w}-\hat{w}^{\setminus i}\|^2}\cdot \sqrt{2(1+\eta_n)(p_{\cq_n}(1-p_{\cq_n})+4\eta_n)\log(2K)}+ \frac{2}{3}(1+\eta_n) \log(2K)\cdot p_{\cq_n}R. 
\end{align*}

Combining all the above calculations proves that
$\sum_{i\in[n]}\|\hat{w}-\hat{w}^{\setminus i}\|^2 \leq \sqrt{\sum_{i\in[n]}\|\hat{w}-\hat{w}^{\setminus i}\|^2} \cdot A + B $,
where
\[A = R\left[\sqrt{2(1+\eta_n)\left(\frac{p_{\cq_n}}{1-p_{\cq_n}}+\frac{4\eta_n}{(1-p_{\cq_n})^2}\right)\log(2K)} + \sqrt{\frac{p_{\cq_n}}{1-p_{\cq_n}}}\right] + 2\rho\sqrt{n}\cdot p_{\cq_n}\]
and 
\[B = R^2 \frac{p_{\cq_n}}{1-p_{\cq_n}} \cdot \frac{2}{3}(1+\eta_n)\log(2K) \leq \frac{A^2}{3}.\]
Then, solving the quadratic,
\[\sum_{i\in[n]}\|\hat{w}-\hat{w}^{\setminus i}\|^2 \leq \sqrt{\sum_{i\in[n]}\|\hat{w}-\hat{w}^{\setminus i}\|^2} \cdot A + \frac{A^2}{3} \ \Longrightarrow \ \sum_{i\in[n]}\|\hat{w}-\hat{w}^{\setminus i}\|^2 \leq  A^2 \cdot \left(\frac{1 + \sqrt{7/3}}{2}\right)^2\leq 1.6A^2.\]
This completes the proof of the theorem.
\end{proof}

\begin{lemma}\label{lem:Bernoulli_exch}
    Let $B\in\{0,1\}^n$ be a uniformly random permutation of $(\mathbf{1}_k,\mathbf{0}_{n-k})$, and let $p=k/n$. Let $c^{(1)},\dots,c^{(K)}\in\R^n$ be fixed, with $\max_{j\in[K]}\|c^{(j)}\|_2\leq C_2$ and $\max_{j\in[K]}\|c^{(j)}\|_\infty\leq C_\infty$. Then
    \[\EE\left[\max_{j\in[K]} c^{(j)\top} (B - p\mathbf{1}_n)\right] \leq  C_2\sqrt{2(1+\eta_n)(p(1-p)+4\eta_n)\log K} + \frac{2C_\infty}{3}(1+\eta_n) \log K,\]
    where $\eta_n = \frac{H_n - 1}{n-H_n}$, for $H_n = 1+\frac{1}{2}+\dots + \frac{1}{n}$ denoting the $n$th harmonic number.
\end{lemma}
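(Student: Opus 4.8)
The plan is to bound the maximum over $j\in[K]$ by a standard exponential-moment (Chernoff/union) argument, so the heart of the matter is to control the moment generating function of a single inner product $c^{(j)\top}(B-p\mathbf{1}_n)$ where $B$ is a uniformly random permutation of $(\mathbf{1}_k,\mathbf{0}_{n-k})$. Write $S_j = c^{(j)\top}(B-p\mathbf{1}_n) = \sum_{i} c^{(j)}_i(B_i - p)$; note $\EE[S_j]=0$. Since $B$ is sampled \emph{without replacement}, the coordinates $B_i$ are negatively associated, and one cannot directly multiply one-dimensional MGFs. The standard device here is a sampling-without-replacement Bernstein inequality: by a result of Hoeffding (reduction to sampling with replacement) or Serfling, for a centered sum of sampled-without-replacement terms with each $|c^{(j)}_i(B_i-p)|\le C_\infty$ and variance proxy essentially $\|c^{(j)}\|_2^2\cdot p(1-p)$, one has a bound of the form $\log \EE[e^{\lambda S_j}] \le \frac{\lambda^2 \sigma^2/2}{1-\lambda C_\infty/3}$ for $0<\lambda<3/C_\infty$, with $\sigma^2 \le (1+\eta_n)\,p(1-p)\,C_2^2 + (\text{lower-order correction})$. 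The factor $(1+\eta_n)$ and the additive $4\eta_n$ inside the variance term are exactly what one picks up when passing from sampling-without-replacement variances to a clean Bernstein-type bound; I would locate this in the companion paper \citep{soloff2024bagging} or derive it via the negative-association/Hoeffding coupling, tracking the harmonic-number correction carefully.

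Granting such a sub-gamma (Bernstein) MGF bound for each $S_j$, the rest is routine. By the union bound over $j\in[K]$ (and the $-c^{(j)}$'s are not needed here since we only take $\max$, not $\max|\cdot|$; in the theorem's application one instead feeds in $\pm c^{(j)}$, replacing $K$ by $2K$), we get for any $\lambda\in(0,3/C_\infty)$
\[
\EE\Big[\max_{j\in[K]} S_j\Big] \le \frac{1}{\lambda}\log\Big(\sum_{j=1}^K \EE[e^{\lambda S_j}]\Big) \le \frac{\log K}{\lambda} + \frac{\lambda\sigma^2/2}{1-\lambda C_\infty/3},
\]
using Jensen to pull the expectation inside the log. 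Then optimize over $\lambda$: the standard sub-gamma optimization (set $\lambda$ so that $\lambda\sigma^2 \approx \sqrt{2\sigma^2\log K}$, or more precisely choose $\lambda = \big(\sigma\sqrt{2\log K/\sigma^2}\big)/\big(1+\ldots\big)$) yields a bound of the form $\sqrt{2\sigma^2\log K} + \tfrac{2}{3}C_\infty\log K$, which after substituting $\sigma^2 \le (1+\eta_n)(p(1-p)+4\eta_n)C_2^2$ and carrying the $(1+\eta_n)$ through the Bernstein linear term gives exactly the claimed
\[
\EE\Big[\max_{j\in[K]} S_j\Big] \le C_2\sqrt{2(1+\eta_n)(p(1-p)+4\eta_n)\log K} + \tfrac{2C_\infty}{3}(1+\eta_n)\log K.
\]

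The main obstacle is the sampling-without-replacement MGF estimate with the sharp $(1+\eta_n)$ and $4\eta_n$ constants — i.e., getting a Bernstein inequality for a linear statistic of a uniformly random subset that is tight enough to produce precisely these harmonic-number corrections, rather than a looser bound. I expect this to require either invoking Hoeffding's reduction theorem (that $\EE[\phi(S)]$ under sampling without replacement is dominated by $\EE[\phi(S)]$ under sampling with replacement, for convex $\phi$, so in particular for $\phi = \exp$) to reduce to the independent-Bernoulli case, and then a separate, more delicate argument to recover the improvement over the trivial with-replacement variance (which is where $\eta_n \asymp \log n / n$ enters, reflecting the finite-population correction for a random-size vs.\ fixed-size interaction). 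Once the MGF bound is in hand, the union bound and $\lambda$-optimization are entirely mechanical, and the concavity fact $t\mapsto\sqrt{t(1-t)+c}$ used in the theorem's proof handles the final marginalization over $\hat p^r$ outside this lemma.
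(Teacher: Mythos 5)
Your proposal is correct and follows essentially the same route as the paper: a Bernstein-type MGF bound for the sampled-without-replacement linear statistic, followed by a union bound over $j$, Jensen's inequality, and optimization over $\lambda$. The single ingredient you leave as a black box is exactly what the paper also imports by citation---Theorem~10 of \citet{barber2024hoeffding} (not \citet{soloff2024bagging}, and not obtainable from Hoeffding's convex-order reduction alone, which would only yield variance proxy $p\,C_2^2$)---which supplies the MGF bound with the $(1+\eta_n)$ and $4\eta_n$ corrections and with $1-\tfrac{2\lambda}{3}(1+\eta_n)C_\infty$ in the denominator, whence the $\tfrac{2C_\infty}{3}(1+\eta_n)\log K$ linear term in the final bound.
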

\begin{proof}
        Applying
   \cite[Theorem 10]{barber2024hoeffding}, for each $j$, for any $0<\lambda< \frac{3}{2\|c^{(j)}\|_\infty(1+\eta_n)}$, it holds that \[\EE\left[\exp\left\{\lambda c^{(j)\top} (B - p\mathbf{1}_n)\right\} \right] \leq \exp\left\{ \frac{\lambda^2\|c^{(j)}\|^2_2 (1+\eta_n)}{2(1-\frac{2\lambda}{3}\|c^{(j)}\|_\infty (1+\eta_n))}\cdot \left(p(1-p) + 4\eta_n\right)\right\}.\]
 Therefore, from the definition of $C_2$ and $C_\infty$, for any $0<\lambda<\frac{3}{2(1+\eta_n)C_\infty}$, this implies
\[\EE\left[\exp\left\{\lambda c^{(j)\top} (B - p\mathbf{1}_n)\right\} \right] \leq \exp\left\{ \frac{\lambda^2C_2^2 (1+\eta_n)}{2(1-\frac{2\lambda}{3} (1+\eta_n)C_\infty)}\cdot \left(p(1-p) + 4\eta_n\right)\right\}\]
for each $j$. We then have
\begin{align*}
    \EE\left[\max_{j\in[K]} c^{(j)\top} (B - p\mathbf{1}_n)\right]
    &\leq \lambda^{-1}\log\left(\EE\left[\exp\left\{\lambda \max_{j\in[K]} c^{(j)\top} (B - p\mathbf{1}_n)\right\}\right]\right)\textnormal{ by Jensen's inequality}\\
    &\leq \lambda^{-1}\log\left(\sum_{j\in[K]}\EE\left[\exp\left\{\lambda  c^{(j)\top} (B - p\mathbf{1}_n)\right\}\right]\right)\\
    &\leq \lambda^{-1}\log\left(K \cdot \exp\left\{ \frac{\lambda^2C_2^2 (1+\eta_n)}{2(1-\frac{2\lambda}{3} (1+\eta_n)C_\infty)}\cdot \left(p(1-p) + 4\eta_n\right)\right\}\right)\\
    &=\frac{\log K}{\lambda} + \frac{\lambda C_2^2 (1+\eta_n)}{2(1-\frac{2\lambda}{3} (1+\eta_n)C_\infty)}\cdot \left(p(1-p) + 4\eta_n\right).
\end{align*}
Taking 
\[\lambda = \frac{\sqrt{\frac{2(1+\eta_n)\log K}{C_2^2(p(1-p)+4\eta_n)}}}{(1+\eta_n)\left(1 + \frac{2C_\infty}{3}\sqrt{\frac{2(1+\eta_n)\log K}{C_2^2(p(1-p)+4\eta_n)}}\right)} < \frac{3}{2(1+\eta_n)C_\infty},\]
this yields the desired bound.
\end{proof}

\subsection{Is \texorpdfstring{\Cref{asm:banach_finite_approx}}{Assumption 2} needed?}\label{sec:finite-dim-necessary}

We return to the role of \Cref{asm:banach_finite_approx}, as discussed in \Cref{rmk:banach_finite_approx}. In particular, this assumption requires that $\cw$ is approximately contained in a $K$-dimensional subspace of $\cb$.

We will now see that no uniform stability guarantee can hold without this type of condition. A counterexample can be found with a standard infinite-dimensional Banach space: $\cb = L_1(\mathbb{N}) = \{(x_1,x_2,\dots) : \sum_{j\geq 1} |x_j| < \infty\}$. We will equip $\cb$ with the norm induced by the $\dtv$ metric, given by $\|x\| = \frac{1}{2}\|x\|_1 = \frac{1}{2}\sum_{j\geq 1}|x_j|$.
This is an infinite-dimensional and separable Banach space. 

Next, define $
\cw = \Delta_\infty = \{x\in\cb: x_i\geq 0\ \forall i, \ \|x\|_1 = 1\}$, the infinite-dimensional probability simplex.
To relate this to our earlier examples, this can be viewed as an infinite-dimensional version of Problem Setting 2(b)---$\cw$ is the space of distributions over $\mathbb{N}$, so for instance, we might have an algorithm $\ca$ that returns a posterior distribution over  \emph{countably infinitely} many possible models.
Although $\cw$ is bounded in the norm $\|\cdot\|$, it cannot be approximated by any finite-dimensional subspace in $\cb$. The following result shows stability can fail when bagging algorithms with output in $\cw$.
\begin{prop}\label{prop:banach_counterexample}
   Let $\cz = \mathbb{N}$, the set of positive integers. Let $\cb=L_1(\mathbb{N})$ (equipped with the norm $\|x\|=\frac{1}{2}\sum_{i\geq 1}|x_i|$), and let $\cw=\Delta_\infty$. 
   Then there exists a (deterministic) algorithm $\ca: \cup_{n\geq 0}\cz^n \times [0,1]\to \cw$, such that for all $n\geq 1$, for any distribution $\cq_n$ satisfying \Cref{asm:cq_n}, derandomized bagging fails to satisfy mean-square stability with respect to $\dtv$ for any $\beta < p_{\cq_n}$.
\end{prop}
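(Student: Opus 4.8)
The plan is to exhibit a base algorithm whose output on any bag $r$ is a point mass, but where the point mass ``jumps'' to a fresh coordinate depending on which data points are present, so that removing any single point moves the derandomized bagged output by a full unit of mass. Concretely, index the coordinates of $\cb = L_1(\mathbb{N})$ by finite subsets of $\mathbb{N}$ (or, after relabeling, by $\mathbb{N}$ itself). Given a data set $\cd = (Z_1,\dots,Z_m)\in\cz^m$ with $\cz=\mathbb{N}$, let $S(\cd) = \{Z_1,\dots,Z_m\}\subseteq\mathbb{N}$ be the \emph{set} of values appearing, and define $\ca(\cd;\xi) = e_{S(\cd)}$, the canonical basis vector (a point mass) at the coordinate labeled by the set $S(\cd)$. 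This is deterministic and takes values in $\cw = \Delta_\infty$. The key structural feature: $\ca$ only depends on $\cd$ through $S(\cd)$, and distinct subsets give \emph{orthogonal} point masses, so $\dtv(e_S, e_{S'}) = 1$ whenever $S\neq S'$.

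Next I would compute the derandomized bagged output. Fix the specific data set $\cd = (1,2,\dots,n)\in\cz^n$ (all distinct values). For a bag $r\sim\cq_n$, we have $S(\cd^r) = \{i : i\in r\}$, so $\hat{w} = \EE_{r\sim\cq_n}[e_{\{i\,:\,i\in r\}}]$ is a mixture over subsets $S\subseteq[n]$ with weight $\cq_n\{r : \{i\in r\} = S\}$ on coordinate $e_S$. For the leave-one-out data set $\cd^{\setminus i}$, using the compatibility assumption in \Cref{asm:cq_n}, the bagged output is $\hat{w}^{\setminus i} = \EE_{r\sim\cq_n}[e_{\{i'\,:\,i'\in r\}} \mid i\notin r]$, a mixture supported only on coordinates $e_S$ with $i\notin S$. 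The heart of the argument is then a lower bound on $\dtv(\hat{w},\hat{w}^{\setminus i})$: since $\hat{w}^{\setminus i}$ puts \emph{zero} mass on every coordinate $S$ containing $i$, and $\hat{w}$ puts total mass exactly $\PP_{r\sim\cq_n}\{i\in r\}$ on such coordinates, we get $\dtv(\hat{w},\hat{w}^{\setminus i}) \geq \PP_{r}\{i\in r\}$. Averaging over $i$ and using symmetry of $\cq_n$ gives $\frac1n\sum_i \EE[\dtv^2(\hat{w},\hat{w}^{\setminus i})] \geq \frac1n\sum_i (\PP_r\{i\in r\})^2 = (p_{\cq_n})^2$ by symmetry (each $\PP_r\{i\in r\}$ equals $p_{\cq_n}$), so mean-square stability cannot hold for any $\beta < p_{\cq_n}$.

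The main obstacle — really the only delicate point — is making the coordinate-indexing honest: $\cb$ is $L_1(\mathbb{N})$, indexed by $\mathbb{N}$, not by finite subsets of $\mathbb{N}$, so I need a fixed injection from $\{S : S\subseteq[n], n\geq 1\}$ (the countable set of finite subsets of $\mathbb{N}$) into $\mathbb{N}$ and define $\ca$ using that injection; one must also check $\ca$ is well-defined on $\cup_{n\geq 0}\cz^n$, including the empty data set (send it to any fixed basis vector), and that measurability in $\xi$ is trivial since $\ca$ ignores $\xi$. A secondary point worth stating carefully is the inequality $\dtv(\hat{w},\hat{w}^{\setminus i})\geq \PP_r\{i\in r\}$: writing both measures as mixtures over the orthogonal basis $\{e_S\}$, total variation distance between two mixtures $\sum_S a_S e_S$ and $\sum_S b_S e_S$ equals $\frac12\sum_S |a_S - b_S|$, which is at least $\sum_{S\ni i} a_S$ since $b_S = 0$ for all such $S$ and the $b$-mass on $\{S\not\ni i\}$ can only partially cancel. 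Everything else is the bookkeeping with \Cref{asm:cq_n} already sketched above; no concentration or Banach-geometry input is needed beyond orthogonality of distinct point masses.
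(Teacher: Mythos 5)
Your construction and argument coincide with the paper's proof: the same point-mass algorithm indexed by the set of distinct values (via a fixed bijection between finite subsets of $\mathbb{N}$ and $\mathbb{N}$), the same data set $(1,\dots,n)$, and the same conclusion; the only cosmetic difference is that you lower-bound $\dtv(\hat{w},\hat{w}^{\setminus i})$ by $p_{\cq_n}$ (correctly, since the excess mass on $\{S\ni i\}$ forces an equal deficit on the complement) whereas the paper computes it to be exactly $p_{\cq_n}$. The proposal is correct.
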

\noindent In other words, stability results of the type shown in \Cref{thm:banach} guarantee a bound on $\beta^2$ (the stability parameter), uniformly over all algorithms $\ca$, that vanishes as $n\to\infty$. In contrast, the above example, we show that---for a particular construction of the algorithm $\ca$---the stability parameter $\beta^2$ is bounded away from zero even as $n\to\infty$.

\begin{proof}[Proof of \Cref{prop:banach_counterexample}]
Let $\mathcal{S}$ denote all finite subsets of $\mathbb{N}$, and let $\psi:\mathcal{S}\to\mathbb{N}$ be a bijective map that assigns each finite subset of $\mathbb{N}$ to a (unique) positive integer.
For any dataset $\cd = (Z_1,\dots,Z_m)\in\mathbb{N}^m$, we define
  \[S(\cd) = \left\{i\in\mathbb{N}:\sum_{j\in[m]}\mathbf{1}\{Z_j=i\}\geq 1\right\},\]
  the set of unique values appearing in the data set. Then define 
  $\ca(\cd;\xi) = \mathbf{e}_{\psi(S(\cd))}\in\cw,$
  where $\mathbf{e}_j$ is the $j$th canonical basis vector, with a $1$ in the $j$th location and $0$'s elsewhere.
  
  Now consider data set $\cd = (1,\dots,n)$. Let $\hat{w}$ and $\hat{w}^{\setminus i}$ denote the derandomized bagged estimators, as before, for data sets $\cd$ and $\cd^{\setminus i}$, respectively. 
  By construction of $\ca$, we can verify that
  $\hat{w}_j = \PP_{r\sim\cq_n}\{\psi(S(\cd^r))=j\}$
  and similarly 
  $\hat{w}^{\setminus i}_j = \PP_{r\sim\cq_n}\{\psi(S(\cd^r))=j \mid i\not\in r\}$.
Therefore, 
\begin{align*}
    \|\hat{w}-\hat{w}^{\setminus i}\|
    &=\frac{1}{2}\sum_{j\in\mathbb{N}} \left|\hat{w}_j - \hat{w}^{\setminus i}_j\right|
    =\frac{1}{2}\sum_{j\in\mathbb{N}}\left| \PP_{r\sim\cq_n}\{\psi(S(\cd^r))=j\} - \PP_{r\sim\cq_n}\{\psi(S(\cd^r))=j\mid i\not\in r\}\right|\\
    &=\frac{1}{2}\sum_{j\in\mathbb{N}}\left| \PP_{r\sim\cq_n}\{\psi(S(\cd^r))=j\} - \frac{\PP_{r\sim\cq_n}\{\psi(S(\cd^r))=j\}}{1-p_{\cq_n}}\cdot \mathbf{1}\{i\not\in \psi^{-1}(j)\}\right|\\
    &=\frac{1}{2}\sum_{j\in\mathbb{N}}\PP_{r\sim\cq_n}\{\psi(S(\cd^r))=j\}\cdot \left| 1 - \frac{\mathbf{1}\{i\not\in \psi^{-1}(j)\}}{1-p_{\cq_n}}\right|\\
    &=\frac{1}{2}\sum_{j\in\mathbb{N}}\PP_{r\sim\cq_n}\{\psi(S(\cd^r))=j\}\cdot\frac{ (1-p_{\cq_n})\cdot \mathbf{1}\{i\in S(\cd^r)\} + p_{\cq_n}\cdot \mathbf{1}\{i\not\in S(\cd^r)\}}{1-p_{\cq_n}}\\
    &=\frac{1}{2}\frac{1}{1-p_{\cq_n}}\left((1-p_{\cq_n})\cdot \PP_{r\sim\cq_n}\{i\in S(\cd^r)\} + p_{\cq_n}\cdot \PP_{r\sim\cq_n}\{i\not\in S(\cd^r)\}\right)\\
    &=p_{\cq_n},
\end{align*}
since $\PP_{r\sim\cq_n}\{i\in S(\cd^r)\}=\PP_{r\sim\cq_n}\{i\in r\}=p_{\cq_n}$.
Since this holds for every $i$, we have
$\frac{1}{n}\sum_{i\in[n]}\|\hat{w}-\hat{w}^{\backslash i}\|^2 = p_{\cq_n}^2$,
which completes the proof.
\end{proof}

\subsubsection{Remark on dimensionality}
 To relate the construction in this proof back to \Cref{asm:banach_finite_approx} and \Cref{thm:banach}, note that in the construction of $\ca$, at each sample size $n$ we are essentially working in a $K$-dimensional space for $K=2^n$---that is, even though $\cw=\Delta_\infty$ is the space of all infinite sequences, the algorithm $\ca$ only ever returns sequences $\hat{w}$ with nonzero values in the first $K=2^n$ entries (and in particular, $\hat{w}$ will always lie in the convex hull of $\mathbf{e}_1,\dots,\mathbf{e}_K$). 
 
 Compare this to the result of \Cref{thm:banach}, which essentially suggests that mean-square stability vanishes at the rate $\beta^2\propto \frac{\log K}{n}$ if $\cw$ can be approximated as a convex hull of $\mathcal{O}(K)$ many points. Since we have taken $\frac{\log K}{n}$ to be nonvanishing in this example (due to $K=2^n$ being the ``effective dimension'', as described above), stability is no longer ensured as $n\to\infty$.

\subsection{Additional results}

First, we state a result for tail stability in the Banach space setting.
\begin{corollary}\label{cor:banach_finite_B}
    Under the  notation and assumptions of \Cref{thm:banach}, derandomized bagging has tail stability $(\eps,\delta)$ with respect to $\|\cdot\|$ for any $\eps,\delta\geq 0$ satisfying
    \[\delta\eps^2\geq 1.6 \left[R\sqrt{\frac{2(1+\eta_n)\log(2K)}{n}\left(\frac{p_{\cq_n}}{1-p_{\cq_n}}+\frac{4\eta_n}{(1-p_{\cq_n})^2}\right)} + R\sqrt{\frac{1}{n} \cdot \frac{p_{\cq_n}}{1-p_{\cq_n}}} + 2\rho\cdot p_{\cq_n}\right]^2.\]
\end{corollary}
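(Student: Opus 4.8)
\textbf{Proof proposal for \Cref{cor:banach_finite_B}.}

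The plan is to obtain this statement as an immediate consequence of \Cref{thm:banach} together with the first claim of \Cref{prop:stability_defs}. \Cref{thm:banach} asserts that, under the stated hypotheses, derandomized bagging has mean-square stability $\beta^2$ with respect to $\|\cdot\|$, where
\[
\beta^2 = 1.6 \left[R\sqrt{\frac{2(1+\eta_n)\log(2K)}{n}\left(\frac{p_{\cq_n}}{1-p_{\cq_n}}+\frac{4\eta_n}{(1-p_{\cq_n})^2}\right)} + R\sqrt{\frac{1}{n} \cdot \frac{p_{\cq_n}}{1-p_{\cq_n}}} + 2\rho\cdot p_{\cq_n}\right]^2
\]
is \emph{exactly} the expression appearing on the right-hand side of the inequality in the corollary's statement.

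Next I would invoke \Cref{prop:stability_defs}, applied with the algorithm $\ca$ in that proposition taken to be the derandomized bagged procedure itself (this is a valid — deterministic — algorithm in the sense of \Cref{sec:framework}, since the expectation over $(r,\xi)$ in \Cref{def:derand-bag} has already been taken). The first claim of \Cref{prop:stability_defs} says that mean-square stability $\beta^2$ implies tail stability $(\eps,\delta)$ for any $\eps,\delta\geq 0$ with $\delta\eps^2\geq\beta^2$; this is just the Markov-inequality argument already written out there. Substituting the value of $\beta^2$ from \Cref{thm:banach} turns the condition $\delta\eps^2\geq\beta^2$ into precisely the displayed inequality in the corollary, which completes the argument.

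There is no real obstacle here: the only thing to be careful about is bookkeeping — verifying that the $\beta^2$ delivered by \Cref{thm:banach} is reproduced verbatim so that the substitution is exact. (One could equally phrase the whole proof as a single sentence: "Combine \Cref{thm:banach} with the first part of \Cref{prop:stability_defs}.") If desired, an analogous finite-$B$ version could be stated by instead feeding the finite-$B$ mean-square bound — obtainable exactly as in the proof of \Cref{cor:hilbert_finite_B}, using \Cref{thm:hayes} to control $\EE[\|\hat w-\tilde w\|^2]$ and $\EE[\|\hat w^{\setminus i}-\tilde w^{\setminus i}\|^2]$ — into \Cref{prop:stability_defs}, but that is not needed for the statement as given.
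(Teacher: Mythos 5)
Your proposal is correct and matches the paper's proof exactly: the paper also derives this corollary in one line by combining \Cref{thm:banach} with the first claim of \Cref{prop:stability_defs}. The bookkeeping check that the $\beta^2$ from \Cref{thm:banach} is reproduced verbatim is the only substantive step, and you have it right.
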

\begin{proof}
    This result follows immediately from \Cref{thm:banach} together with \Cref{prop:stability_defs}.
\end{proof}

Finally, we state a finite-$B$ result for the Banach space setting.

\begin{corollary}
    Under the  notation and assumptions of \Cref{thm:banach}, let $B\geq 1$ be fixed. Then bagging, run with base algorithm~$\ca$ and resampling distribution~$\cq_n$ and with $B$ bags, has mean-square stability $\beta^2$ with respect to~$\|\cdot\|$, with
\begin{multline*}
\beta^2
:= \inf_{\eps>0}\Bigg\{(1+\eps)\cdot 1.6 \bigg[R\sqrt{\frac{2(1+\eta_n)\log(2K)}{n}\left(\frac{p_{\cq_n}}{1-p_{\cq_n}}+\frac{4\eta_n}{(1-p_{\cq_n})^2}\right)} + R\sqrt{\frac{1}{n} \cdot \frac{p_{\cq_n}}{1-p_{\cq_n}}} \\{}+ 2\rho\cdot p_{\cq_n}\bigg]^2
{}+ (1+\eps^{-1})\cdot \left(\frac{8KR^2}{B} + 8\rho^2\right)\Bigg\}
\end{multline*}
\end{corollary}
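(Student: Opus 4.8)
The plan is to follow the proof of \Cref{cor:hilbert_finite_B}, splitting the $B$-bag output into the derandomized bagged output plus a Monte Carlo fluctuation; the one new ingredient is that the Hilbert-space concentration bound \Cref{thm:hayes} used there is unavailable in a general Banach space, so the fluctuation must instead be controlled through the finite-dimensional approximation of \Cref{asm:banach_finite_approx}.

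Fix $i\in[n]$ and let $\hat w,\hat w^{\setminus i}$ be the $B$-bag outputs of \Cref{def:bag} run on $\cd$ and on $\cd^{\setminus i}$, and $\tilde w,\tilde w^{\setminus i}$ the corresponding derandomized outputs of \Cref{def:derand-bag} (so $\tilde w=\E[\hat w]$, $\tilde w^{\setminus i}=\E[\hat w^{\setminus i}]$, and $\tilde w\in\cw$ since $\cw$ is convex and closed). Writing $a_i=\tilde w-\tilde w^{\setminus i}$ and $b_i=(\hat w-\tilde w)-(\hat w^{\setminus i}-\tilde w^{\setminus i})$, the triangle inequality together with $2\|a_i\|\,\|b_i\|\le\eps\|a_i\|^2+\eps^{-1}\|b_i\|^2$ gives, for every $\eps>0$,
\[\|\hat w-\hat w^{\setminus i}\|^2\le(1+\eps)\,\|a_i\|^2+(1+\eps^{-1})\,\|b_i\|^2,\]
an inequality that holds in any normed space and requires no coupling between the two runs. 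Averaging over $i$ and taking expectations over the $B$ bags, $\frac1n\sum_i\|a_i\|^2=\frac1n\sum_i\|\tilde w-\tilde w^{\setminus i}\|^2$ is bounded by $1.6$ times the bracketed quantity in the statement, directly by \Cref{thm:banach}; so it suffices to prove $\frac1n\sum_i\E\|b_i\|^2\le\frac{8KR^2}{B}+8\rho^2$, and then take the infimum over $\eps$.

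For the fluctuation term I would use $\E\|b_i\|^2\le 2\,\E\|\hat w-\tilde w\|^2+2\,\E\|\hat w^{\setminus i}-\tilde w^{\setminus i}\|^2$ and bound $\E\|\hat w-\tilde w\|^2$ (the leave-one-out term being identical, with $\cq_{n-1}$ in place of $\cq_n$). Here $\hat w-\tilde w=\frac1B\sum_{b=1}^B(A_b-\tilde w)$ with $A_b=\ca(\cd^{r_b};\xi_b)$ i.i.d.\ and $\tilde w=\E[A_1]$. Applying \Cref{asm:banach_finite_approx} to the pair $(A_b,\tilde w)\in\cw\times\cw$ and choosing the decomposition measurably, write $A_b-\tilde w=\sum_{j=1}^K t^{(b)}_j v_j+e_b$ with $\|t^{(b)}\|_1\le R$ and $\|e_b\|\le\rho$; since $\E[A_1-\tilde w]=0$ we have $\sum_j\E[t^{(1)}_j]\,v_j+\E[e_1]=0$ as a Bochner integral, whence
\[\hat w-\tilde w=\sum_{j=1}^K\Big(\frac1B\sum_b t^{(b)}_j-\E[t^{(1)}_j]\Big)v_j+\Big(\frac1B\sum_b e_b-\E[e_1]\Big).\]
Using $\|v_j\|=1$ and the triangle inequality, then $(\sum_{j=1}^K x_j)^2\le K\sum_j x_j^2$, the i.i.d.\ variance identity $\var\!\big(\frac1B\sum_b t^{(b)}_j\big)=\var(t^{(1)}_j)/B$, and $\sum_j\var(t^{(1)}_j)\le\E\|t^{(1)}\|_2^2\le\E\|t^{(1)}\|_1^2\le R^2$, the span part contributes on the order of $KR^2/B$, while the residual part is deterministically at most $2\rho$ in norm; collecting constants gives the advertised bound.

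The main obstacle is precisely the phenomenon flagged in \Cref{rmk:banach_finite_approx}: in a general Banach space the Monte Carlo error $\E\|\frac1B\sum_b(A_b-\tilde w)\|^2$ of an average of bounded i.i.d.\ vectors need not be $O(1/B)$, so the argument genuinely has to be routed through the $K$ coordinates supplied by \Cref{asm:banach_finite_approx}. This is also why the $B$-dependent term here carries a factor $K$ rather than the $\log K$ appearing in \Cref{thm:banach}: the coordinatewise estimate above only affords the crude comparison $\|\cdot\|_1\le\sqrt K\|\cdot\|_2$, not the exchangeable-concentration machinery of \Cref{lem:Bernoulli_exch}. A secondary technical point is ensuring the map $A_b\mapsto(t^{(b)},e_b)$ can be taken jointly measurable, so that $\E[t^{(1)}]$ and $\E[e_1]$ are well defined.
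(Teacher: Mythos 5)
Your overall architecture matches the paper's proof of this corollary: the same split of $\hat w-\hat w^{\setminus i}$ into the derandomized difference $\tilde w-\tilde w^{\setminus i}$ plus a Monte Carlo fluctuation, the same $(1+\eps),(1+\eps^{-1})$ weighting, \Cref{thm:banach} for the first piece, and a coordinatewise variance bound through the $K$ directions of \Cref{asm:banach_finite_approx} for the second (your explanation of why this costs $K$ rather than $\log K$ is exactly right). Where you genuinely diverge is in how the fluctuation $\E\|\hat w-\tilde w\|^2$ is controlled. The paper does not center each $A_b=\ca(\cd^{r_b};\xi_b)$ at $\tilde w$; instead it introduces an independent copy $\hat w'$ of $\hat w$, proves $\E\|\hat w-\tilde w\|^2\le\E\|\hat w-\hat w'\|^2$ via the dual norm and Jensen, symmetrizes with Rademacher signs $\zeta_b$, and only then applies \Cref{asm:banach_finite_approx} to the differences $\hat w^{(b)}-\hat w'^{(b)}$ \emph{conditionally on the bag outputs}, so that the coefficients $t^b_j$ are deterministic numbers and the remaining expectation is over the signs alone. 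That buys two things your route gives up. First, it eliminates the measurable-selection issue you flag: no expectation of $t^{(b)}$ or $e_b$ ever needs to be defined. Second, and more consequentially for matching the stated constant, the residual in the symmetrized sum is $\|\tfrac1B\sum_b\zeta_b\Delta_b\|\le\rho$, whereas your direct centering forces $\|\tfrac1B\sum_b e_b-\E[e_1]\|\le 2\rho$; after squaring and propagating through $\E\|b_i\|^2\le 2\E\|\hat w-\tilde w\|^2+2\E\|\hat w^{\setminus i}-\tilde w^{\setminus i}\|^2$, your argument yields $8KR^2/B+32\rho^2$ rather than the advertised $8KR^2/B+8\rho^2$. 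So as written you prove the corollary only with a worse constant on the $\rho^2$ term (the $KR^2/B$ term and all other steps check out); to recover the stated bound you would need either the paper's symmetrization device or some other way to pay for the residual only once rather than twice.
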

\begin{proof}
    Let $\hat{w}$ and $\hat{w}^{\setminus i}$ be the output of the bagged algorithm (as in \Cref{def:bag}), and let $\tilde{w}$ and $\tilde{w}^{\setminus i}$ be the corresponding derandomized bagged estimators (as in \Cref{def:derand-bag}). We have
\begin{multline*}\frac{1}{n}\sum_{i\in[n]}\EE\left[\|\hat{w} - \hat{w}^{\setminus i}\|^2\right]
= \frac{1}{n}\sum_{i\in[n]}\EE\left[\|(\hat{w} - \tilde{w})  - (\hat{w}^{\setminus i} - \tilde{w}^{\setminus i}) + (\tilde{w} - \tilde{w}^{\setminus i})\|^2\right]\\
\leq \frac{1+\eps}{n}\sum_{i\in[n]}\|\tilde{w} - \tilde{w}^{\setminus i}\|^2 + \frac{2(1+\eps^{-1})}{n}\sum_{i\in[n]}\EE\left[\|\hat{w} - \tilde{w}\|^2\right] + \frac{2(1+\eps^{-1})}{n}\sum_{i\in[n]}\EE\left[\|\hat{w}^{\setminus i} - \tilde{w}^{\setminus i} \|^2 \right],\end{multline*}
since $\|a+b+c\|^2\leq (1+\eps)\|a\|^2+(1+\eps^{-1})\|b+c\|^2 \leq (1+\eps)\|a\|^2+2(1+\eps^{-1})(\|b\|^2+\|c\|^2)$, for any $a,b,c\in\cb$ (because $\|\cdot\|$ satisfies the triangle inequality).
The first term in the upper bound is bounded by \Cref{thm:banach}. Now we bound the remaining terms.

First, let $\hat{w}'$ be an i.i.d.\ copy of $\hat{w}$. Under the same notation as in the proof of \Cref{thm:banach}, let $\|\cdot\|_*$ denote the dual  norm. Then
\begin{multline*}
    \EE[\|\hat{w}-\tilde{w}\|^2]
    =\EE\left[\sup_{f:\|f\|_*\leq 1}f(\hat{w}-\tilde{w})^2\right]
    =\EE\left[\sup_{f:\|f\|_*\leq 1}f\left(\hat{w}-\EE[\hat{w}'\mid\hat{w}]\right)^2\right]\\
    =\EE\left[\sup_{f:\|f\|_*\leq 1}\EE\left[f\left(\hat{w}-\hat{w}'\right)\mid\hat{w}\right]^2\right]
    \leq \EE\left[\EE\left[\sup_{f:\|f\|_*\leq 1}f\left(\hat{w}-\hat{w}'\right)^2\mid\hat{w}\right]\right]
    =\EE[\|\hat{w}-\hat{w}'\|^2],
\end{multline*}
where the second step holds since $\hat{w}\indep \hat{w}'$ and $\EE[\hat{w}']=\EE[\hat{w}]=\tilde{w}$.
Now let 
$\hat{w}^{(b)} = \ca(\cd^{r_b};\xi_b)$,
for each $b\in[B]$, so that we have $\hat{w} = \frac{1}{B}\sum_{b=1}^B \hat{w}^{(b)}$, and similarly write
$\hat{w}'{}^{(b)}= \ca(\cd^{r_b'};\xi_b')$,
where the $r_b'$ and $\xi'_b$ samples are independently drawn, so that $\hat{w}'=\frac{1}{B}\sum_{b=1}^B \hat{w}'{}^{(b)}$.
Then $\hat{w}-\hat{w}'$ is equal in distribution to
$\frac{1}{B}\sum_{b=1}^B \zeta_b \left(\hat{w}^{(b)}-\hat{w}'{}^{(b)}\right)$
where the $\zeta_b$'s are i.i.d.\ random signs. 

Now we will bound $\EE[\|\frac{1}{B}\sum_{b=1}^B \zeta_b (w_b - w'_b)\|^2]$ for any \emph{fixed} $w_1,\dots,w_B,w'_1,\dots,w'_B\in\cw$.
For each $b$ we can write
$w_b-w'_b = \sum_{j=1}^K t^b_j v_j + \Delta_b$
with $\|t^b\|_1\leq R$ and $\|\Delta_b\|\leq\rho$, by \Cref{asm:banach_finite_approx}. Then
\begin{multline*}
    \left\|\frac{1}{B}\sum_{b=1}^B \zeta_b (w_b - w'_b)\right\|
    =\left\|\frac{1}{B}\sum_{b=1}^B \zeta_b \left(\sum_{j=1}^K t^b_j v_j + \Delta_b\right)\right\|\\
    \leq \sum_{j=1}^K \left\|\frac{1}{B}\sum_{b=1}^B\zeta_b  t^b_j v_j \right\| + \left\|\frac{1}{B}\sum_{b=1}^B \zeta_b \Delta_b\right\|
    \leq \sum_{j=1}^K \left|\frac{1}{B}\sum_{b=1}^B\zeta_b  t^b_j  \right| + \rho,
\end{multline*}
since $\|v_j\|=1$ for all $j$, and $\|\Delta_b\|\leq \rho$ for all $b$. Therefore,
\begin{multline*}
\EE\left[\left\|\frac{1}{B}\sum_{b=1}^B \zeta_b (w_b - w'_b)\right\|^2\right]
\leq \EE\left[\left(\sum_{j=1}^K \left|\frac{1}{B}\sum_{b=1}^B\zeta_b  t^b_j  \right| + \rho\right)^2\right]
\leq 2K\sum_{j=1}^K \EE\left[\left|\frac{1}{B}\sum_{b=1}^B\zeta_b  t^b_j  \right| ^2\right] + 2\rho^2\\
\leq \frac{2K}{B^2} \sum_{j=1}^K \sum_{b=1}^B (t_j^b)^2+ 2\rho^2
= \frac{2K}{B^2} \sum_{b=1}^B \|t^b\|^2_2+ 2\rho^2
\leq \frac{2K}{B^2} \sum_{b=1}^B \|t^b\|^2_1+ 2\rho^2
\leq \frac{2KR^2}{B} + 2\rho^2.
\end{multline*}

Combining all our work we then have
\begin{multline*}\EE[\|\hat{w}-\tilde{w}\|^2]
\leq \EE[\|\hat{w} - \hat{w}'\|^2]
= \EE\left[\left\|\frac{1}{B}\sum_{b=1}^B \zeta_b (\hat{w}^b - \hat{w}'{}^b)\right\|^2\right]\\
=\EE\left[\EE\left[\left\|\frac{1}{B}\sum_{b=1}^B \zeta_b (\hat{w}^b - \hat{w}'{}^b)\right\|^2 \ \middle| \ \hat{w}^1,\dots,\hat{w}^B,\hat{w}'{}^1,\dots,\hat{w}'{}^B\right]\right]
\leq  \frac{2KR^2}{B} + 2\rho^2.
\end{multline*}
The same bound holds for $\EE[\|\hat{w}^{\setminus i} - \tilde{w}^{\setminus i}\|^2]$. Combining all our calculations establishes the result. 
\end{proof}

\bibliographystyle{dcu}
\bibliography{citations}

\end{document}